\documentclass[12pt,thmsa]{amsproc}

\usepackage[dvips]{graphics}
\input{amssym.def}
\input{amssym.tex}

\def\gnk{G_{n,k}}
\def\cgnk{\Pi_{n,k}}
\def\cgnj{\Pi_{n,j}}

\def\gnk{G_{n,k}}

\def\bbr{{\Bbb R}}

\def\esp{{\hbox{\rm ess$\,$sup}}}

\def\gnk{G_{n,k}}

\def\rn{\bbr^n}

\def\irn{\intl_{\bbr^n}}

\def\icgr{\intl_{\cgnk}}

\def\part{\partial}
\def\intl{\int\limits}

\def\Gam{\Gamma}

\def\a{\alpha}
\def\om{\omega}

\def\del{\delta}
\def\vp{\varphi}

\def\g{\gamma}
\def\gam{\gamma}

\def\sig{\sigma}
\def\lam{\lambda}
\def\z{\zeta}
\def\e{\varepsilon}
\def\t{\tau}


\newtheorem{theorem}{Theorem}[section]
\newtheorem{lemma}[theorem]{Lemma}

\theoremstyle{definition}

\theoremstyle{remark}

\theoremstyle{corollary}

\numberwithin{equation}{section}


\newcommand{\be}{\begin{equation}}
\newcommand{\ee}{\end{equation}}

\newcommand{\bea}{\begin{eqnarray}}
\newcommand{\eea}{\end{eqnarray}}
\newcommand{\Bea}{\begin{eqnarray*}}
\newcommand{\Eea}{\end{eqnarray*}}

\def\sideremark#1{\ifvmode\leavevmode\fi\vadjust{\vbox to0pt{\vss
 \hbox to 0pt{\hskip\hsize\hskip1em
\vbox{\hsize2cm\tiny\raggedright\pretolerance10000
 \noindent #1\hfill}\hss}\vbox to8pt{\vfil}\vss}}}%

                                                   %


\begin{document}

\title[Weighted norm inequalities for  $k$-plane transforms]
{Weighted norm inequalities for  $k$-plane transforms}

\author{B. Rubin}
\address{
Department of Mathematics, Louisiana State University, Baton Rouge,
LA, 70803 USA}

\email{borisr@math.lsu.edu}

\subjclass[2010]{Primary 44A12; Secondary 47G10}



\keywords{Radon  transforms, weighted norm estimates.}

\begin{abstract}
We obtain sharp  inequalities for the k-plane transform, the ``j-plane to k-plane'' transform, and the corresponding dual transforms,
 acting on  $L^p$ spaces with a radial power weight. The operator norms are explicitly  evaluated. Some generalizations and open problems are discussed.
\end{abstract}

\maketitle

\section{Introduction}

\setcounter{equation}{0}

Mapping properties of Radon-like transforms were studied by many authors, e.g.,
\cite{Cal}-\cite{Dr87}, \cite{EO, F,  GSW, Gr, IS, LT, OS,   RT, So,
  Str,  Wo}, to mention a few. Most of the publications deal with  $L^p$-$L^q$
estimates or mixed norm inequalities, when the  problem is to minimize a  gap between necessary and sufficient  conditions  and find the best possible bounds.
 We also mention a series of works devoted to weighted norm estimates
 for Radon-like transforms of radial functions; see, e.g., \cite{DNO, KR12a, KR12b}.

In the present article we show that for the $k$-plane transform in $\bbr^n$ \cite{He, Ru04b}, the more general
``$j$-plane to $k$-plane" transform  \cite[p. 701]{Str},
\cite{Dr86, GK, Ru04d}, and the corresponding dual transforms, sharp estimates can be obtained if the action of
these operators is considered in the $L^p$-$L^q$ setting with $p=q$ and  radial power weights.
In this case the proofs are elementary and self-contained. Our approach is inspired by a series of publications on
operators with homogeneous kernel dating back, probably, to
 Schur \cite{Sch}; see, e.g., \cite {HLP, Sa, Str69}. It is not surprising
  that the same ideas are applicable to some operators of integral geometry, because the  common point is invariance under
rotations and dilations.

   The weighted $L^2$ estimate for the hyperplane Radon transform in $\bbr^n$, $n\ge 3$, was
 obtained by  Quinto \cite[p. 410]{Q} via spherical harmonics and Hardy's
 inequality. This estimate was generalized by Kumar and Ray \cite{KR10}
 to all $n\ge 2$ and all $p$ by making use of the similar spherical harmonics techniques combined with
 interpolation.
 Our approach is completely different, covers these results, and provides  the best possible constants, which
 coincide with explicitly evaluated operator  norms.

Let us proceed to details.
We denote by $\cgnk$   the
 manifold of all non-oriented  $k$-planes $\t$ in
$\rn$; $\gnk$ is the  Grassmann manifold
 of  $k$-dimensional  linear subspaces $\xi$ of $\rn$; $1 \le k\le n-1$.
 Each $k$-plane $\tau$  is parameterized by the pair
$( \xi, u)$, where $\xi \in \gnk$ and $ u \in \xi^\perp$ (the
orthogonal complement of $\xi $ in $\rn$). Thus, $\cgnk$ is a bundle over $\gnk$   with an
$(n-k)$-dimensional fiber.
  The manifold  $\cgnk$ is endowed with the product measure $d\t=d\xi du$,
where $d\xi$ is the
 $O(n)$-invariant probability measure  on $\gnk$ and $du$ denotes the  volume element on $\xi^\perp$.
 We define
 \[
L^p(\cgnk; w)=\{ f: ||f||_{p,w}\equiv ||wf||_p<\infty \},\qquad 1\le p\le \infty,\]
where $||\cdot ||_p$ is the  norm in $L^p(\cgnk)$.
  If   $w(\tau)=|\t|^\nu$, where $|\tau|$
denotes the  Euclidean distance from the plane $\tau \in \cgnk$ to the origin, we also write $L^p(\cgnk; w)=L^p_\nu(\cgnk)$ and
$||f||_{p,w}=||f||_{p,\nu}$.

The $k$-plane transform   of a sufficiently good function $f$ on $\bbr^n$  is a function $R_kf$ on $\cgnk$ defined by
 \be\label{uu7}
 (R_kf)(\t)\equiv (R_kf)(\xi,u) =\intl_{x\in\tau} f(x) \,d_\tau x\equiv \intl_\xi f(y+u) \,d y,
 \ee
where $dy$ is the volume element in $\xi$. The  more general ``$j$-plane to $k$-plane" transform takes a
function $f$ on $\cgnj$ to a function $R_{j,k}f$ on $\cgnk$, $0\le j<k<n$, by the formula
 \be\label{uu7j}
(R_{j,k}f)(\t)\!\equiv \!(R_{j,k}f)(\xi,u)\! = \!\!\intl_{\z\subset\tau}\!\! f(\z) \,d_\tau \z\!\equiv  \!
\!\intl_{\eta\subset\xi} \! \!d_\xi\eta\!\intl_{\eta^\perp\cap \xi} \!\!\! f(\eta, y\!+\!u)\, dy.
\ee
Here $d_\xi\eta$ denotes the  probability measure on the  manifold of all $j$-dimensional linear subspaces
$\eta$ of $\xi$.

\begin{theorem} \label{kk5cds} Let $1\le p \le \infty$,  $\; 1/p+1/p'=1$, $\nu=\mu-k/p'$,  $\mu>k-n/p$.
Then $R_k$ is a linear bounded operator from $L^p_\mu(\bbr^n)$ to $L^p_\nu(\cgnk)$ with the norm
\be\label{3458}
||R_k||=\pi^{k/2}\,\left (\frac{\sig_{n-k-1}}{\sig_{n-1}}\right )^{1/p}\, \frac{\Gam ((\mu +n/p -k)/2)}{\Gam ((\mu +n/p)/2)}
.\ee
\end{theorem}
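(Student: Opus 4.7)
The plan is to exploit the joint dilation- and rotation-invariance of the $k$-plane transform via a weighted Schur/H\"older argument on the fiber integral, with the auxiliary weight chosen to match the predicted extremizer; sharpness is then obtained by testing against truncations of a critical power function $|x|^{-\alpha}$.

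For the upper bound I would apply H\"older's inequality inside the defining integral (\ref{uu7}) on the $k$-plane $\xi$, introducing the weight $|y+u|^a$:
\[
\abs{(R_kf)(\xi,u)}\le \Bigl(\intl_\xi |y+u|^{-ap'}\,dy\Bigr)^{1/p'}\Bigl(\intl_\xi |f(y+u)|^p\,|y+u|^{ap}\,dy\Bigr)^{1/p}.
\]
In the first factor, $|y+u|^2=|y|^2+|u|^2$ (since $u\in\xi^\perp$); passing to polar coordinates on $\xi$ and using a standard Beta-function evaluation, this factor reduces to a constant multiple of $|u|^{k/p'-a}$, convergent precisely when $ap'>k$. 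Raising to the $p$-th power, multiplying by $|u|^{\nu p}$, integrating in $u\in\xi^\perp$, swapping $(y,u)$ for $x=y+u\in\bbr^n$ by Fubini, and integrating in $\xi\in\gnk$, I obtain an expression proportional to $\int_{\bbr^n}|x|^{ap}|f(x)|^p\,dx$ times $\int_{\gnk}|P_{\xi^\perp}x|^{\beta}\,d\xi=|x|^{\beta}c(\beta)$, where $\beta=\nu p+p(k/p'-a)$ and $c(\beta)$ is computed from the Beta distribution of $|P_{\xi^\perp}\omega|^2$ on $S^{n-1}$. Matching the exponent of $|x|$ with $\mu p$ forces $\nu=\mu-k/p'$ independently of $a$, precisely the hypothesis of the theorem.

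The free parameter $a$ is then used to make the constant sharp: choose $a=(\mu+n/p)/p'$ so that $ap'/2=(\mu+n/p)/2$. This makes the two $\Gam$-ratios from the $\xi$-integral and from the $\gnk$-integral coincide, multiplying via $p/p'+1=p$ to the $p$-th power of $\Gam((\mu+n/p-k)/2)/\Gam((\mu+n/p)/2)$. The remaining prefactor $\pi^{kp/(2p')}\,\Gam(n/2)/\Gam((n-k)/2)$ collapses to $\pi^{kp/2}\,\sig_{n-k-1}/\sig_{n-1}$ via $\sig_{m-1}=2\pi^{m/2}/\Gam(m/2)$, and taking $p$-th roots yields the $\le$-part of (\ref{3458}). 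The admissibility condition $ap'>k$ becomes $\mu+n/p>k$, i.e., exactly the hypothesis $\mu>k-n/p$.

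For the matching lower bound I would test on $f_\vare(x)=|x|^{-\alpha}\chi_{\{\vare<|x|<1/\vare\}}$ with $\alpha=\mu+n/p$, the critical exponent making $\|f_\vare\|_{p,\mu}^p=2\sig_{n-1}\log(1/\vare)$. A direct computation for the unrestricted power function yields $(R_k|\cdot|^{-\alpha})(\xi,u)=\pi^{k/2}\,[\Gam((\alpha-k)/2)/\Gam(\alpha/2)]\,|u|^{k-\alpha}$, and with $\alpha=\mu+n/p$, $\nu=\mu-k/p'$ the exponent $\nu p+(k-\alpha)p$ collapses to $-(n-k)$, so the $u$-integral over the shell $\vare<|u|<1/\vare$ in $\xi^\perp$ contributes $2\sig_{n-k-1}\log(1/\vare)$. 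Hence $\|R_kf_\vare\|_{p,\nu}/\|f_\vare\|_{p,\mu}$ tends to the right-hand side of (\ref{3458}) as $\vare\to 0$. The main obstacle is precisely this sharpness step: one must show that truncating $|x|^{-\alpha}$ to the shell changes $R_kf_\vare$ only by a boundary contribution of lower logarithmic order after normalization, so that the formal equality case of the H\"older step is recovered in the limit.
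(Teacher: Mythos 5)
Your argument is correct in substance but takes a genuinely different route from the paper's on both halves. For the upper bound the paper does not run a Schur-type weighted H\"older test: it rewrites $(R_kf)(\xi,u)$ through a ``hemispherical'' change of variables as $r^k\intl_{S^k_+}f_g(r\theta/\theta_{k+1})\,\theta_{k+1}^{-k-1}\,d\sig(\theta)$ (Lemma \ref{ll7zw}) and applies Minkowski's integral inequality together with the $G$-invariant form of the norm (Lemma \ref{mmz12}); this produces the sharp constant directly, with no auxiliary parameter to optimize. Your version is equally valid: the choice $ap'=\mu+n/p$ is the right one, the admissibility conditions $ap'>k$ and $\beta>k-n$ both collapse to $\mu>k-n/p$, and it is a good consistency check that equality in your H\"older step forces $|f(x)|=c\,|x|^{-\mu-n/p}$, exactly the critical power you later test with. (Note, though, that your H\"older step degenerates at $p=1$ and $p=\infty$; those endpoints need the separate easy arguments --- for $p=1$ the exact identity (\ref{hhyt}) already gives the norm.) For the lower bound the paper sidesteps truncation errors entirely by dualizing: it tests the bilinear form $\int(R_kf)g$ against radial $f,g$ with $g_0=r^{\mu p}f_0^{p-1}$ and $f_0(r)=r^{-\mu-n/p-\e}\chi_{\{r>1\}}$, so the $1/(\e p)$ divergences cancel exactly and one concludes by monotone convergence as $\e\to0$. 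Your direct near-extremizer $|x|^{-\mu-n/p}\chi_{\{\e<|x|<1/\e\}}$ also works, but the step you flag as the ``main obstacle'' is a real gap that must be closed: you need a pointwise \emph{lower} bound on $R_kf_\e$, not just $R_kf_\e\le R_k|\cdot|^{-\alpha}$. It closes by a standard localization: since $|y+u|\ge|u|$ the inner truncation removes nothing once $|u|\ge\e$, and for $\e\le|u|\le\del/\e$ the outer truncation costs only a factor $1-\eta(\del)$ with $\eta(\del)\to0$ as $\del\to 0$, while the radial integral over that range is still $2\log(1/\e)+O(1)$; letting $\e\to0$ and then $\del\to0$ recovers the constant. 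So your plan is sound; the paper's duality trick simply buys a cleaner limit, while your Schur argument makes the extremizing profile more transparent.
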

\begin{theorem} \label{kk5cdj} Let $1\le p \le \infty$,  $\; 1/p+1/p'=1$, $\nu=\mu-(k-j)/p'$,  $\mu>k-n/p-j/p'$.
Then  $R_{j,k}$ is a linear bounded operator from $L^p_\mu(\cgnj)$ to $L^p_\nu(\cgnk)$ with the norm
\be\label{3458j}
||R_{j,k}||=\pi^{(k-j)/2}\,\left (\frac{\sig_{n-k-1}}{\sig_{n-j-1}}\right )^{1/p}\, \frac{\Gam ((\mu +n/p -k+j/p')/2)}{\Gam ((\mu +n/p-j/p)/2)}
.\ee
\end{theorem}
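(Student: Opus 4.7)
\textit{Proof plan.} My approach parallels the argument for Theorem~\ref{kk5cds}, with the source space $\bbr^n$ replaced by $\cgnj$. The operator $R_{j,k}$ has a nonnegative integration kernel, commutes with the natural $O(n)$-action, and is homogeneous under dilations; these symmetries allow the operator-norm computation to be reduced to a Schur-type estimate for a one-dimensional operator with positive homogeneous kernel. Concretely, by rotation invariance and positivity of both $R_{j,k}$ and the weights, it suffices to consider nonnegative \emph{radial} inputs $f(\eta,v)=F(|v|)$. For such $f$, the orthogonal decomposition $v=y+u$ with $y\in\eta^\perp\cap\xi$, $u\in\xi^\perp$ (valid because $\eta\subset\xi$), followed by polar coordinates on the $(k-j)$-dimensional space $\eta^\perp\cap\xi$, yields
\[
(R_{j,k}f)(\xi,u)=\sigma_{k-j-1}\int_0^\infty F\bigl(\sqrt{r^2+|u|^2}\bigr)\,r^{k-j-1}\,dr,
\]
so $R_{j,k}f$ depends only on $|u|$. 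Polar coordinates in the weighted norms transform the problem into a weighted $L^p((0,\infty))$ bound; the spheres $S^{n-j-1}$ and $S^{n-k-1}$ contribute exactly the factor $(\sigma_{n-k-1}/\sigma_{n-j-1})^{1/p}$ appearing in \eqref{3458j}.

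The remaining one-dimensional integral operator has a positive kernel homogeneous under $(s,t)\mapsto(\lambda s,\lambda t)$. I would apply Schur's test with the power test function $F_0(s)=s^{-\alpha_0}$, where
\[
\alpha_0=\mu+\frac{n-j}{p}=\mu+\frac{n}{p}-\frac{j}{p}
\]
is the critical exponent placing $F_0$ at the boundary of $L^p_\mu(\cgnj)$. The substitution $r=|u|\rho$ together with the classical Beta integral
\[
\int_0^\infty(1+\rho^2)^{-\alpha_0/2}\rho^{k-j-1}\,d\rho=\frac{\Gamma((k-j)/2)\,\Gamma((\alpha_0-k+j)/2)}{2\,\Gamma(\alpha_0/2)},
\]
whose convergence is equivalent to the hypothesis $\mu>k-n/p-j/p'$, gives
\[
(R_{j,k}F_0)(t)=\pi^{(k-j)/2}\,\frac{\Gamma((\alpha_0-k+j)/2)}{\Gamma(\alpha_0/2)}\,t^{-(\alpha_0-(k-j))}.
\]
The balance $\nu=\mu-(k-j)/p'$ is precisely the relation that sends $s^{-\alpha_0}$ to a borderline power on the target side, so Schur's test closes up self-dually and produces the upper bound. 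Since $\alpha_0-k+j=\mu+n/p-k+j/p'$, the Gamma-factor matches \eqref{3458j} exactly.

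Sharpness I would obtain by truncating the extremal: with $F_\varepsilon(s)=s^{-\alpha_0}\chi_{[\varepsilon,1/\varepsilon]}(s)$, compute $\|F_\varepsilon\|_{p,\mu}^p$ and $\|R_{j,k}F_\varepsilon\|_{p,\nu}^p$ via the same Beta integral and let $\varepsilon\to 0^+$; dilation invariance forces the ratio of norms to the claimed constant. The main obstacle I anticipate is the rigorous reduction to radial $f$: symmetrization on the Grassmann bundle $\cgnj$ is somewhat delicate, so the cleanest workaround is to run Schur's test with the radial Schur weight $|\sigma|^{-\alpha_0}$ directly against all $f\in L^p_\mu(\cgnj)$, using the computed identity $R_{j,k}(|\sigma|^{-\alpha_0})=c\,|\tau|^{-(\alpha_0-(k-j))}$, rather than symmetrizing $f$ by rearrangement.
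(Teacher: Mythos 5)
Your radial computation is correct: the formula $(R_{j,k}f)(\xi,u)=\sigma_{k-j-1}\int_0^\infty F(\sqrt{r^2+|u|^2})\,r^{k-j-1}\,dr$ agrees with the Abel-type representation the paper quotes from \cite{Ru04d}, your Beta integral and the identification $\alpha_0-(k-j)=\mu+n/p-k+j/p'$ are right, and restricted to radial inputs your reduction to a one-dimensional operator with homogeneous kernel reproduces the constant in (\ref{3458j}) exactly, sphere factors included. The truncation of the extremal power also gives a legitimate proof of the lower bound $\|R_{j,k}\|\ge c_{j,k}$ (the paper instead pairs $R_{j,k}f$ against a power-type $g$ through duality, but the two devices are interchangeable here).

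The genuine gap is the upper bound for \emph{non-radial} $f$, which you correctly flag but do not close by either of your routes. (i) Radialization: naive symmetrization only shows that the norm on radial functions is at most $\|R_{j,k}\|$, i.e.\ the trivial direction. What actually works is Minkowski's integral inequality applied to the family $\{f_\gamma\}_{\gamma\in O(n)}$: positivity and rotation invariance give $\bigl(\int_G|(R_{j,k}f_\gamma)(\tau)|^p\,d\gamma\bigr)^{1/p}\le R_{j,k}\bigl[\bigl(\int_G f_\gamma^p\,d\gamma\bigr)^{1/p}\bigr](\tau)$, and the inner expression is radial with the same $L^p_\mu$-norm as $f$. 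This is exactly the mechanism the paper uses, packaged through Lemma \ref{mmz12} and the hemispherical representation of Lemma \ref{ll7zw0} so that radialization never appears as a separate step. (ii) Schur's test run directly on the bundle: this also works, but it is not self-dual there. You need the second Schur condition, which requires computing $R_{j,k}^*$ of a radial power on $\cgnk$ (an integral over a compact Grassmannian producing a ratio of Gamma functions involving $n$); that is precisely where the factor $(\sigma_{n-k-1}/\sigma_{n-j-1})^{1/p}$ enters on this route, since $R_{j,k}(|\zeta|^{-\alpha_0})$ alone yields only the $\pi^{(k-j)/2}\,\Gamma(\cdot)/\Gamma(\cdot)$ part. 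The two Schur constants $C_1$, $C_2$ are different, and the identity $C_1^{1/p'}C_2^{1/p}=c_{j,k}$ must be checked — it holds, but it is a computation, not an automatic consequence of homogeneity. Supply either (i) or (ii), and treat $p=1,\infty$ separately where the duality bookkeeping degenerates, and your argument is complete.
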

Apart of these theorems, we obtain similar statements for the corresponding dual transforms; see Sections \ref{ss32}, \ref{uu75df}, \ref{ss42}, \ref{ss43}.  The assumptions for $\mu$ and $\nu$ are best possible.
 Generalizations and open problems are discussed in Section 5.

\section{Preliminaries}

\noindent{ \bf Notation.}  In the following $ \; \sigma_{n-1}  = 2 \pi^{n/2}/
 \Gamma(n/2)$ is the area of the unit sphere $S^{n-1}$ in $\rn; \;$ $d\sig(\theta)$ stands for the surface element of $S^{n-1}$;
 $S^{n-1}_+ = \{ x=(x_1, \cdots ,x_n) \in S^{n-1}: \ x_n>0 \}$ is the ``upper" hemisphere
of $S^{n-1}$; $e_1, \ldots, e_n$ are coordinate unit
 vectors; $G=O(n)$ is the  group of orthogonal  transformations of $\bbr^n$ endowed with the
 invariant probability measure. This group acts  on $\gnk$ transitively. For $g,\gam \in G$, we denote
  \[  f_g (x)=f(gx), \qquad \vp_\gam (\xi,u)=\vp(\gam\xi,\gam u).\]

We will need the following simple statements.

\begin{lemma} \label{mmz12} The norm of  a function $\vp(\t)\equiv \vp(\xi,u)\in L^p_\nu(\cgnk)$  can be computed by the formula
\be\label {LLp} ||\vp||_{p,\nu}=\Bigg (\sig_{n-k-1}\intl_0^\infty r^{n-k-1+\nu p} \,dr\intl_{G}
|\vp_\gam (\bbr^k, re_{k+1})|^p\, d\gam \Bigg )^{1/p},\ee
if $\;1\le p<\infty$, and
\be\label {LLpb} ||\vp||_{\infty,\nu}=\underset{\gam \in G, \;r>0}{\esp} \, \{ r^\nu |\vp_\gam (\bbr^k, re_{k+1})|\},\ee
if $\;p=\infty$.
\end{lemma}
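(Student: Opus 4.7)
The plan is to unfold $\|\varphi\|_{p,\nu}$ using the product structure $d\tau=d\xi\,du$, then use $O(n)$-equivariance to move the integration onto the fixed reference plane $(\bbr^k, re_{k+1})$.

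First I would start from
\[
\|\varphi\|_{p,\nu}^p=\intl_{\gnk}d\xi\intl_{\xi^\perp}|\varphi(\xi,u)|^p\,|u|^{\nu p}\,du,
\]
using that the distance from the plane $\tau=(\xi,u)$ to the origin is $|u|$. Since $G=O(n)$ acts transitively on $\gnk$ with $\gam\mapsto\gam\bbr^k$ a surjection that pushes forward Haar measure on $G$ to the invariant probability measure $d\xi$, and since for $\xi=\gam\bbr^k$ we have $\xi^\perp=\gam\bbr^{n-k}$ with Lebesgue measure preserved, the substitution $u=\gam v$ with $v\in\bbr^{n-k}$ yields
\[
\|\varphi\|_{p,\nu}^p=\intl_G d\gam \intl_{\bbr^{n-k}}|\varphi(\gam\bbr^k,\gam v)|^p\,|v|^{\nu p}\,dv
=\intl_G d\gam \intl_{\bbr^{n-k}}|\varphi_\gam(\bbr^k,v)|^p\,|v|^{\nu p}\,dv,
\]
where the last equality uses the definition $\vp_\gam(\xi,u)=\vp(\gam\xi,\gam u)$.

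Next I would switch to polar coordinates $v=r\theta$ on $\bbr^{n-k}$, so $dv=r^{n-k-1}dr\,d\sig(\theta)$, and then exchange the order of integration to get
\[
\|\varphi\|_{p,\nu}^p=\intl_0^\infty r^{n-k-1+\nu p}\,dr\intl_{S^{n-k-1}}d\sig(\theta)\intl_G|\vp_\gam(\bbr^k,r\theta)|^p\,d\gam.
\]
The crucial step is to show that the innermost integral is independent of $\theta$. For each $\theta\in S^{n-k-1}$ I pick $\rho_\theta\in O(n)$ that fixes $\bbr^k$ pointwise (as a subspace), acts on $\bbr^{n-k}$, and sends $e_{k+1}$ to $\theta$. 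Then $\vp_\gam(\bbr^k,r\theta)=\vp(\gam\bbr^k,r\gam\rho_\theta e_{k+1})=\vp_{\gam\rho_\theta}(\bbr^k,re_{k+1})$, so by right-invariance of Haar measure on $G$ the $\gam$-integral equals $\intl_G |\vp_\gam(\bbr^k,re_{k+1})|^p\,d\gam$, independent of $\theta$. Pulling this out of the $\theta$-integral produces the factor $\sig_{n-k-1}$ and yields (\ref{LLp}).

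The main conceptual point is the $\theta$-independence just described; this is really the statement that the stabilizer of $\bbr^k$ in $O(n)$ contains $O(n-k)$ acting transitively on the sphere in the fiber, so one can ``rotate away'' the direction $\theta$ at the expense of a reparameterization of $\gam$ that is absorbed by Haar invariance. All other steps are routine change-of-variables manipulations. The case $p=\infty$ follows by the same argument with ess sup replacing the $L^p$-integrals, since ess sup is likewise invariant under the measure-preserving substitutions above.
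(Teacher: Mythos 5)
Your argument is correct and is precisely the ``straightforward'' computation the paper has in mind: it records only the definition of $||\vp||_{p,\nu}$ as an iterated integral over $\gnk$ and $\xi^\perp$ and leaves the rest to the reader, and your steps (pushing $d\xi$ forward from Haar measure on $G$, polar coordinates in the fiber, and rotating $\theta$ to $e_{k+1}$ by an element of the stabilizer of $\bbr^k$ absorbed by right-invariance) are exactly the omitted details. Nothing further is needed.
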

\begin{proof} The proof is straightforward and based on the definition
\[||\vp||_{p,\nu}=\left \{\begin{array} {ll}\displaystyle{\Bigg (\,\intl_{\gnk}\!d\xi\intl_{\xi^\perp}
\big | |u|^{\nu} \vp(\xi,u)\big |^p\, du\Bigg)^{1/p}} & \mbox {if $\;1\le p<\infty$,} \\
{}\\
\underset{\xi, u}{\esp} \,\{ |u|^{\nu}|\vp(\xi,u)|\}& \mbox {if $\;p=\infty$.} \\
\end{array}\right.\]
\end{proof}

\begin{lemma}\label {kkooll}
Let $f\in L^1(\bbr^{n-1})$, $\theta=(\theta_1, \cdots, \theta_{n})\in S^{n-1}$. Then
 \be\label{teq2} \intl_{\bbr^{n-1}} f(x)\, dx=\intl_{S^{n-1}_+}f
\Big (\frac{\theta'}{\theta_n}\Big )\,
\frac{d\sig(\theta)}{\theta_n^n}, \qquad \theta'=(\theta_1, \cdots, \theta_{n-1}).\ee
\end{lemma}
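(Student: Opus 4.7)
The plan is to realize (\ref{teq2}) as the change-of-variables formula associated to the gnomonic projection of the upper hemisphere onto $\bbr^{n-1}$. It has two parts: first rewrite the surface measure $d\sig$ in graph coordinates over the open unit ball of $\bbr^{n-1}$, then compute the Jacobian of the map $\theta\mapsto \theta'/\theta_n$.

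For the first step, I would parameterize $S^{n-1}_+$ by $\theta'$ in the open unit ball $\{|\theta'|<1\}\subset\bbr^{n-1}$ via $\theta_n=\sqrt{1-|\theta'|^2}$. Since $\part\theta_n/\part\theta'_j=-\theta'_j/\theta_n$, the standard graph-surface formula yields
$$d\sig(\theta)=\sqrt{1+|\nabla\theta_n|^2}\,d\theta'=\frac{d\theta'}{\theta_n},$$
so the right-hand side of (\ref{teq2}) becomes $\intl_{|\theta'|<1} f(\theta'/\theta_n)\,d\theta'/\theta_n^{n+1}$.

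For the second step, I would substitute $x=\theta'/\theta_n$; its inverse $\theta'=x/\sqrt{1+|x|^2}$, $\theta_n=1/\sqrt{1+|x|^2}$ sends $\bbr^{n-1}$ diffeomorphically onto $\{|\theta'|<1\}$. The Jacobian is the only computation requiring any care: differentiating gives
$$\frac{\part x_i}{\part\theta'_j}=\frac{\del_{ij}}{\theta_n}+\frac{\theta'_i\theta'_j}{\theta_n^3}=\frac{1}{\theta_n}\Big(\del_{ij}+\frac{\theta'_i\theta'_j}{\theta_n^2}\Big),$$
and the rank-one identity $\det(I+vv^T)=1+|v|^2$ with $v=\theta'/\theta_n$, together with $|\theta'|^2+\theta_n^2=1$, produces
$$\det\frac{\part x}{\part\theta'}=\frac{1}{\theta_n^{n-1}}\Big(1+\frac{|\theta'|^2}{\theta_n^2}\Big)=\frac{1}{\theta_n^{n+1}}.$$
Hence $dx=d\theta'/\theta_n^{n+1}$, and the right-hand side of (\ref{teq2}) collapses to $\intl_{\bbr^{n-1}} f(x)\,dx$.

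The only nontrivial ingredient is the Jacobian, and even this is just a two-line application of the matrix determinant lemma; everything else is bookkeeping. The hypothesis $f\in L^1(\bbr^{n-1})$ guarantees absolute convergence of both sides, so the change-of-variables theorem applies without further comment.
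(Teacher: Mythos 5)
Your proof is correct, but it takes a different route from the paper. The paper writes the left-hand side in polar coordinates $x=s\om$, $s>0$, $\om\in S^{n-2}$, substitutes $s=\tan\vp$, and then recognizes the resulting integral over $(\vp,\om)\in[0,\pi/2)\times S^{n-2}$ as the hemisphere integral via the factorization $d\sig(\theta)=\sin^{n-2}\vp\,d\vp\,d\sig(\om)$ for $\theta=(\om\sin\vp,\cos\vp)$; the only computation is one-dimensional. You instead work directly in graph coordinates $\theta_n=\sqrt{1-|\theta'|^2}$ over the unit ball, which gives $d\sig=d\theta'/\theta_n$, and then compute the full $(n-1)\times(n-1)$ Jacobian of the gnomonic map $x=\theta'/\theta_n$ using $\det(I+vv^T)=1+|v|^2$; all your formulas check out ($1+|\nabla\theta_n|^2=1/\theta_n^2$ and $\det(\part x/\part\theta')=\theta_n^{-n-1}$ are both right). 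What the paper's approach buys is brevity and the avoidance of any multivariable Jacobian, at the cost of invoking the spherical decomposition of $d\sig$; what yours buys is a coordinate-free-in-spirit, global change of variables that makes the exponent $n+1=(n-1)+2$ transparent as (dimension of the ball) plus (the $1+|v|^2$ factor), at the cost of the matrix determinant lemma. Either argument is complete; your remark that the equator is $\sig$-null and that $f\in L^1$ legitimizes the change of variables disposes of the remaining technicalities.
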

\begin{proof}
\bea
 &&\intl_{\bbr^{n-1}} f(x)\, dx=\intl_0^\infty s^{n-2}\, ds\intl_{S^{n-2}}
f (s\om)\, d\sig(\om) \qquad (s=\tan \vp) \nonumber \\
&=&\intl_0^{\pi/2}\frac{\sin ^{n-2} \vp}{\cos^n \vp}\,
d\vp\intl_{S^{n-2}}
 f \Big (\frac{\om\sin\vp}{\cos\, \vp}\Big )\, d\sig(\om)=\intl_{S^{n-1}_+}f
\Big (\frac{\theta'}{\theta_n}\Big )\,
\frac{d\sig(\theta)}{\theta_n^n}.\nonumber\eea
 \end{proof}

 \section{Mapping properties of the $k$-plane transform}\label{mmlaa}

\subsection{Preparations}

The following explicit equalities, reflecting action of $R_k$ on weighted $L^1$ spaces, were obtained in \cite[Theorem 2.3]{Ru04b}.

\begin{lemma} \label{00125} Let
\be\label{uub5} \lam_\mu =\frac{ \Gam ((n-k+\mu)/2) \, \Gam (n/2)}{\Gam ((n+\mu)/2)
\, \Gam ((n-k)/2)}, \qquad \mu>k-n.\ee
Then
\be\label{hhyt} \icgr (R_kf)(\tau) |\t|^{\mu} d\t=\lam_\mu \irn f(x) |x|^{\mu} dx, \ee
\be\label{hhyt1}  \icgr \!(R_kf)(\tau)
\frac{|\t|^{\mu}\;  d\t}{(1+|\t|^2)^{(n+\mu)/2}}\!=\!\lam_\mu \irn\!
f(x)\frac{|x|^{\mu}\; dx}{(1+|x|^2)^{(n-k+\mu)/2}}, \ee provided that
either side of the corresponding equality exists in the Lebesgue
sense.
\end{lemma}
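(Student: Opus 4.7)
My approach is to unfold $R_k$ using the parametrization $\tau=(\xi,u)$ with $\xi\in \gnk$ and $u\in\xi^\perp$. Writing $(R_kf)(\xi,u)=\int_\xi f(y+u)\,dy$ and changing variables $(y,u)\mapsto x=y+u\in\bbr^n$, so that $|u|=|P_{\xi^\perp}x|$, Fubini puts the $\xi$-integration on the inside. Both left-hand sides then take the form $\int_{\bbr^n}f(x)\Phi(x)\,dx$, where
\[
\Phi(x)=\intl_{\gnk}W(|P_{\xi^\perp}x|)\,d\xi,
\]
with $W(s)=s^\mu$ for (\ref{hhyt}) and $W(s)=s^\mu(1+s^2)^{-(n+\mu)/2}$ for (\ref{hhyt1}).

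Second, I exploit the $O(n)$-invariance of $d\xi$. Writing $x=|x|e$ for any $x\ne 0$, we have $|P_{\xi^\perp}x|=|x|\,|P_{\xi^\perp}e|$, and the distribution of $|P_{\xi^\perp}e|^2$ under $d\xi$ is $\mathrm{Beta}((n-k)/2,k/2)$ on $[0,1]$, independently of the choice of unit vector $e$. Consequently
\[
\Phi(x)=\frac{\Gam(n/2)}{\Gam((n-k)/2)\Gam(k/2)}\intl_0^1 W(|x|\sqrt{t})\,t^{(n-k)/2-1}(1-t)^{k/2-1}\,dt.
\]
For (\ref{hhyt}), $W(|x|\sqrt t)=|x|^\mu t^{\mu/2}$ turns this into a standard Beta integral that evaluates directly to $\lam_\mu |x|^\mu$, matching the right-hand side. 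For (\ref{hhyt1}), after pulling out $|x|^\mu$, the residual integral
\[
\intl_0^1\frac{t^{(n-k+\mu)/2-1}(1-t)^{k/2-1}}{(1+|x|^2 t)^{(n+\mu)/2}}\,dt
\]
is Euler's integral representation of $\,_2F_1(a,b;c;-|x|^2)$ with $a=c=(n+\mu)/2$ and $b=(n-k+\mu)/2$; since $\,_2F_1(c,b;c;z)=(1-z)^{-b}$, it collapses in closed form and yields the target factor $\lam_\mu|x|^\mu(1+|x|^2)^{-(n-k+\mu)/2}$.

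Finally, the assumption $\mu>k-n$ is precisely what makes $\Gam((n-k+\mu)/2)$ (and hence $\lam_\mu$) finite, while finiteness of either side of the asserted identity justifies the Fubini swap. The main nontrivial step is the hypergeometric collapse in (\ref{hhyt1}); everything else reduces to a change of variables together with the well-known Beta distribution of a projection's squared length on $\gnk$.
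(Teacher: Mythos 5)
The paper does not actually prove this lemma: it is imported verbatim as \cite[Theorem 2.3]{Ru04b}, so there is no in-text argument to compare against. Judged on its own, your proof is correct and self-contained. The reduction of the left-hand side to $\int_{\bbr^n} f(x)\Phi(x)\,dx$ via $x=y+u$, $dy\,du=dx$, and Fubini is sound; the identification of the law of $|P_{\xi^\perp}e|^2$ under the invariant probability measure on $\gnk$ as $\mathrm{Beta}((n-k)/2,k/2)$ is the standard fact (it coincides with the law of $\theta_{k+1}^2+\cdots+\theta_n^2$ for $\theta$ uniform on $S^{n-1}$); and both evaluations of $\Phi$ check out. For (\ref{hhyt}) the Beta integral gives exactly $\lam_\mu|x|^\mu$ under $\mu>k-n$, and for (\ref{hhyt1}) the Euler integral with $a=c=(n+\mu)/2$, $b=(n-k+\mu)/2$ does collapse to $(1+|x|^2)^{-(n-k+\mu)/2}$ times the same Beta factor. (If you want to avoid invoking $_2F_1$ altogether, the substitution $u=(1+s)t/(1+st)$ with $s=|x|^2$ turns that residual integral directly into $(1+s)^{-b}B(b,c-b)$.) The one point worth stating more carefully is the Fubini/Tonelli bookkeeping: run the computation first for $|f|$ to see that the two sides of each identity are finite simultaneously (since $\lam_\mu>0$), which is what the clause ``provided that either side exists in the Lebesgue sense'' is encoding; your closing sentence gestures at this but a referee would want it spelled out.
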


\begin{lemma}\label{gttgzw0}  Let $f\in L^p_\mu(\bbr^n)$, $1\le p \le \infty$, and suppose that
\be\label {78lm9} \mu\,\left \{ \begin{array} {ll} >k-n/p & \mbox{if $\;1< p\le \infty$,}\\
\ge k-n & \mbox{if $\;p=1$.}\\
\end{array}\right.\ee
Then $(R_kf)(\tau)$ is finite for almost all $\tau\in  \cgnk$.
 If (\ref{78lm9}) fails, then there is a function $f_0\in L^p_\mu(\bbr^n)$
 such that $(R_kf)(\tau)\equiv \infty$. For instance,
\be\label{gttgzw1}
f_0 (x)=\frac{|x|^{-\mu}\,(2+|x|)^{-n/p}}{\log^{1/p+\del} (2+|x|)}, \ee
where  $0<\del <1/p'$ if $1< p\le \infty$, and any $\del>0$ if $p=1$.
\end{lemma}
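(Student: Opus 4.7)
The plan is to derive both parts of the lemma from Lemma \ref{00125}: sufficiency by applying H\"older's inequality to formula (\ref{hhyt1}), and necessity by reducing $(R_kf_0)(\tau)$ to a one-dimensional radial integral and examining its large-argument asymptotics.

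For \textbf{sufficiency}, I would assume $f\ge 0$ without loss of generality. For $\mu>k-n$, Lemma \ref{00125} gives the identity (\ref{hhyt1}). H\"older's inequality with conjugate exponents $p,p'$ then bounds its right-hand side by
\[\lam_\mu\,\|f\|_{p,\mu}\,\bigl\|(1+|x|^2)^{-(n-k+\mu)/2}\bigr\|_{p'}.\]
A polar-coordinate computation shows this norm is finite precisely when $(n-k+\mu)p'>n$ for $1<p\le\infty$ (equivalently $\mu>k-n/p$), and when $n-k+\mu\ge 0$ for $p=1$ (equivalently $\mu\ge k-n$), matching condition (\ref{78lm9}). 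The left-hand side of (\ref{hhyt1}) is then finite, and since the weight $|\tau|^\mu(1+|\tau|^2)^{-(n+\mu)/2}$ is positive everywhere, $(R_kf)(\tau)<\infty$ almost everywhere. The boundary case $p=1,\ \mu=k-n$ (where $\lam_\mu=\infty$) would be handled separately by splitting $f=f_1+f_2$ with $f_1$ supported in $|x|\le 1$: on this set $|x|^{k-n}\ge 1$, so $f_1\in L^1(\bbr^n)$ and $(R_kf_1)(\tau)<\infty$ a.e.\ by Fubini on $\bbr^n=\xi\oplus\xi^\perp$; the tail $f_2$ is treated by a direct estimate using that $|y+u|\ge 1$ on its support.

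For \textbf{necessity}, I would first verify $f_0\in L^p_\mu(\bbr^n)$ by direct computation: in polar coordinates $\bigl\|\,|x|^\mu f_0\,\bigr\|_p^p$ is comparable to $\int_1^\infty dr/(r\log^{1+p\delta}r)$, finite iff $\delta>0$ (with the obvious modification when $p=\infty$). Then using the radial symmetry of $f_0$ and polar coordinates on $\xi\cong\bbr^k$, followed by the substitution $s=\sqrt{r^2+|u|^2}$,
\[(R_kf_0)(\xi,u)=\sig_{k-1}\intl_{|u|}^\infty F(s)\,(s^2-|u|^2)^{(k-2)/2}\,s\,ds,\qquad F(s)=\frac{s^{-\mu}(2+s)^{-n/p}}{\log^{1/p+\delta}(2+s)}.\]
The integrand at $s\to\infty$ is $\sim s^{k-1-\mu-n/p}\log^{-(1/p+\delta)}(s)$. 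When (\ref{78lm9}) fails, this diverges: by a power if $\mu<k-n/p$ (or $\mu<k-n$ for $p=1$), and in the critical case $\mu=k-n/p$ (only possible for $p>1$) because the constraint $\delta<1/p'$ forces $1/p+\delta\le 1$, making $\int ds/(s\log^{1/p+\delta}s)$ divergent. A separate check confirms no compensating finiteness at the lower endpoint $s=|u|$, so $(R_kf_0)(\tau)\equiv\infty$.

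The hard part will be the boundary case $p=1,\ \mu=k-n$ of the sufficient direction, where Lemma \ref{00125} just fails to apply ($\lam_{k-n}=\infty$) and the integrability of $R_kf$ must be established by the Fubini-based local/non-local decomposition of $f$ sketched above. The remainder of the proof is a clean H\"older application followed by elementary asymptotic analysis of a one-dimensional integral.
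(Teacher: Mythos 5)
Your treatment of the main case $\mu>k-n/p$ (H\"older's inequality applied to (\ref{hhyt1})) and of the counterexample (reduction of $R_kf_0$ to the Abel-type radial integral and asymptotic analysis at infinity) coincides with the paper's proof; the paper simply cites the representation (\ref{990a}) and leaves the asymptotics to the reader, so your explicit verification that $f_0\in L^p_\mu(\bbr^n)$ and your log-critical analysis at $\mu=k-n/p$ are correct added details.

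The genuine issue is the boundary case $p=1$, $\mu=k-n$, which you rightly isolate as the hard part but do not actually resolve. Your splitting $f=f_1+f_2$ handles $f_1$ (supported in $|x|\le 1$) correctly by Fubini, but the proposed ``direct estimate using that $|y+u|\ge 1$'' for the tail $f_2$ cannot work at a fixed $\xi$: the hypothesis $\int |f_2(x)|\,|x|^{k-n}\,dx<\infty$ controls $f_2$ only through a weight decaying at infinity, and for a fixed subspace $\xi$ one can build $f_2\in L^1_{k-n}$ (e.g.\ $f_2(x)=h(P_\xi x)g(P_{\xi^\perp}x)$ with $h(y)=|y|^{-k}$ for $|y|>2$ and $g$ a bump on $\xi^\perp$) for which $(R_kf_2)(\xi,u)=\infty$ on a set of $u$ of positive measure in that fiber. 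So any proof must average over $\xi\in\gnk$ as well: it is precisely the measure of $k$-planes passing within bounded distance of a point $x$ that produces the factor $|x|^{k-n}$, and that is the content of Lemma \ref{00125}. The paper's fix is short: set $\tilde\mu=k-n+\e$, $\e>0$; then
\[
\irn\frac{|f(x)|\,|x|^{\tilde\mu}}{(1+|x|^2)^{(n-k+\tilde\mu)/2}}\,dx
=\irn\frac{|f(x)|}{|x|^{n-k}}\cdot\frac{|x|^{\e}}{(1+|x|^2)^{\e/2}}\,dx
\le\irn\frac{|f(x)|}{|x|^{n-k}}\,dx<\infty ,
\]
so (\ref{hhyt1}) applies with the admissible parameter $\tilde\mu>k-n$, its left-hand side is finite, and strict positivity of the weight yields $(R_kf)(\tau)<\infty$ a.e. You should replace your $f_2$-step by this argument (or by an explicit computation of the $\xi$-average, which amounts to rederiving the same identity).
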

\begin{proof}  If $\mu >k-n/p$,   the first statement follows from (\ref{hhyt1}) by H\'older's inequality.
If  $\mu =k-n$ (for $p=1$),  we set $\tilde \mu =k-n+\e$, $\e>0$. Since
\[\irn\!
\frac{|f(x)|\,|x|^{\tilde \mu}\; dx}{(1\!+\!|x|^2)^{(n-k+\tilde \mu)/2}}\!=\! \irn\!
\frac{|f(x)|}{|x|^{n-k} }\,  \frac{|x|^{\e}\; dx}{(1\!+\!|x|^2)^{\e/2}}\!\le\! \irn\!
\frac{|f(x)|}{|x|^{n-k} }\,dx\!<\!\infty,\]
then  (\ref{hhyt1}) holds with the new parameter $\tilde \mu$ and, therefore,  $(R_kf)(\tau)$ is finite for almost all $\tau\in  \cgnk$.
 The  second statement of the lemma follows from the
Abel type representation  \cite[p. 98]{Ru04b}:
\be\label{990a}(R_kf)(\tau)\!=\!
\sig_{k-1}\intl_{|\tau|}^\infty f_0(r)(r^2 -|\tau|^2)^{k/2 -1} r \,dr, \qquad f(x)\!\equiv \!f_0 (|x|). \ee
\end{proof}

The  scaling argument (cf. \cite[p. 118]{St}) yields the following.
\begin{lemma} \label{pplkj0}Let $1\le p \le \infty$, $1/p+1/p'=1$.  If
$||R_kf||_{p,\nu} \le c \,||f||_{p,\mu}$
for a nonnegative function $f\not\equiv 0$ and a constant $c>0$ independent of $f$, then $\nu=\mu-k/p'$.
\end{lemma}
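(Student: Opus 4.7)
The plan is a standard dilation (homogeneity) argument. First I would introduce the dilate $f^\lam(x)=f(\lam x)$, $\lam>0$, and compute how both sides of the hypothesized inequality scale under $f\mapsto f^\lam$. A direct change of variables $x'=\lam x$ in $\bbr^n$ gives
\be\label{ppl1}
\|f^\lam\|_{p,\mu}=\lam^{-\mu-n/p}\,\|f\|_{p,\mu}.
\ee
For the $k$-plane transform, writing $\tau=(\xi,u)$ and substituting $y'=\lam y$ in the inner integral of (\ref{uu7}) yields the pointwise identity $(R_kf^\lam)(\xi,u)=\lam^{-k}(R_kf)(\xi,\lam u)$. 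Then, using Lemma \ref{mmz12} (or directly the definition of $\|\cdot\|_{p,\nu}$) with the substitution $u'=\lam u$ in $\xi^\perp\cong\rnk$, I get
\be\label{ppl2}
\|R_kf^\lam\|_{p,\nu}=\lam^{-k-\nu-(n-k)/p}\,\|R_kf\|_{p,\nu}.
\ee

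Next I would apply the hypothesized inequality $\|R_kg\|_{p,\nu}\le c\,\|g\|_{p,\mu}$ to $g=f^\lam$ and combine (\ref{ppl1})--(\ref{ppl2}) to obtain
\[
\|R_kf\|_{p,\nu}\le c\,\lam^{\,k/p'+\nu-\mu}\,\|f\|_{p,\mu}
\]
for every $\lam>0$, where I used $k-k/p=k/p'$.

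Finally, I would argue that both sides of this inequality are finite and strictly positive. Since $f\ge 0$ and $f\not\equiv 0$, one has $\|f\|_{p,\mu}>0$; one may assume $\|f\|_{p,\mu}<\infty$ (otherwise the hypothesis is vacuous), and then the Abel-type representation (\ref{990a}) (applied, e.g., to the radial rearrangement or to a nonzero $f$ producing a nontrivial $R_kf$) ensures $\|R_kf\|_{p,\nu}>0$. Letting $\lam\to 0^+$ and $\lam\to\infty$ in the displayed inequality then forces the exponent to vanish, i.e. $\nu=\mu-k/p'$.

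The only delicate point is verifying that $\|R_kf\|_{p,\nu}$ is strictly positive for some admissible nonnegative $f$, so that the two-sided limits in $\lam$ actually give a contradiction when $k/p'+\nu-\mu\neq 0$; everything else is bookkeeping with the two scaling identities (\ref{ppl1}) and (\ref{ppl2}).
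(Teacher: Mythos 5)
Your proposal is correct and is exactly the argument the paper intends: the paper gives no written proof, merely remarking that ``the scaling argument'' (with a reference to Stein) yields the lemma, and your dilation computation with $f^\lam(x)=f(\lam x)$ is that argument carried out in full, with the exponent $k/p'+\nu-\mu$ correctly identified. The only cosmetic remark is that the appeal to the Abel-type representation (\ref{990a}) is unnecessary for positivity of $\|R_kf\|_{p,\nu}$: since $f\ge 0$ and $f\not\equiv 0$, Tonelli's theorem already gives $(R_kf)(\tau)>0$ on a set of positive measure.
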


Lemmas \ref{gttgzw0} and \ref{pplkj0} contain necessary conditions for  the  operator $R_k$ to be  bounded
  from
$L^p_\mu(\bbr^n)$ to $L^p_\nu(\cgnk)$. It will be shown  that these conditions, except  $\mu =k-n$ when $p=1$, are also
sufficient.

The next statement is obvious.
\begin{lemma}\label{bb67z} Let $1\le p \le \infty$,  $\; 1/p+1/p'=1$. If the operator
 $R_k: L^p_\mu(\bbr^n)\!\to \!L^p_\nu(\cgnk)$ is bounded, then
\be\label{iiaaq} \Big | \intl_{\cgnk}(R_kf)(\tau)\,g(\tau)\,d\tau\Big |\le ||R_k||_{L^p_\mu(\bbr^n) \to L^p_\nu(\cgnk)}\,
 ||f||_{p,\mu}\, ||g||_{p',-\nu}\ee
for any $f\!\in \!L^p_\mu(\bbr^n)$ and  $g\!\in \!L^{p'}_{-\nu}(\cgnk)$.
\end{lemma}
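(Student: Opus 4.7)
The statement is essentially a restatement of the boundedness of $R_k$ via the duality pairing, so the plan is simply to insert the weight $|\tau|^{\nu}$ artificially and then invoke H\"older's inequality. First I would factor the integrand as
\[
(R_kf)(\tau)\,g(\tau)=\bigl[|\tau|^{\nu}(R_kf)(\tau)\bigr]\cdot\bigl[|\tau|^{-\nu}g(\tau)\bigr],
\]
so that the weight cancels out pointwise. Applying H\"older's inequality on $\cgnk$ with exponents $p$ and $p'$ (with respect to the unweighted measure $d\tau$ from the definition in the Introduction) to this factorization yields
\[
\Bigl|\int_{\cgnk}(R_kf)(\tau)\,g(\tau)\,d\tau\Bigr|\le \bigl\||\tau|^{\nu}R_kf\bigr\|_{L^p(\cgnk)}\cdot\bigl\||\tau|^{-\nu}g\bigr\|_{L^{p'}(\cgnk)}= \|R_kf\|_{p,\nu}\cdot\|g\|_{p',-\nu}.
\]
Here I use the very definitions of the weighted norms $\|\cdot\|_{p,\nu}$ and $\|\cdot\|_{p',-\nu}$ (with the obvious interpretation in the endpoint cases $p=1,\infty$). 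Finally, the boundedness hypothesis on $R_k$ gives $\|R_kf\|_{p,\nu}\le \|R_k\|_{L^p_\mu(\bbr^n)\to L^p_\nu(\cgnk)}\,\|f\|_{p,\mu}$, and substituting this into the previous display produces exactly the desired inequality (\ref{iiaaq}).

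There is no genuine obstacle here: the H\"older step simultaneously establishes absolute convergence of the pairing integral (so no separate integrability check is needed), and the rest is a mechanical application of the operator-norm bound. The only thing to keep in mind is the convention for $L^\infty$ norms in the endpoint cases $p\in\{1,\infty\}$, where $\|\cdot\|_{\infty,\nu}$ is the weighted essential supremum from Lemma~\ref{mmz12}; H\"older's inequality remains valid in these cases with the standard interpretation $1/\infty=0$.
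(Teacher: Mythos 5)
Your proof is correct and is precisely the routine argument the paper has in mind when it labels the lemma ``obvious'' and omits the proof: insert the weight $|\tau|^{\nu}$, apply H\"older's inequality, and invoke the operator-norm bound. No discrepancy with the paper's approach.
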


The following ``hemispherical'' representation of the $k$-plane transform plays the crucial role in our consideration; cf. \cite[p. 188]{Na} for $k=n-1$, where it was used for different purposes. We denote
 \[
  \bbr^k= \bbr e_1  \oplus \cdots \oplus \bbr e_k, \qquad \bbr^{k+1}=\bbr^k \oplus \bbr e_{k+1},\]
   \[S^k=\!S^{n-1} \cap \bbr^{k+1}, \quad S^k_+ =\{ \theta =(\theta_1, \cdots ,\theta_{k+1})\in S^{k}:\;
  \theta_{k+1}>0\}.\]

\begin{lemma} \label{ll7zw} Suppose that $u\neq 0$ and let $f_g (x)=f(gx)$, where $g\in G$ satifies  $ g\, \bbr^k = \xi$ and $g\,e_{k+1}= u/|u|$.
 Then
\be \label{0xxsr}(R_kf)(\xi, u)\! = \! (R_k f_g)(\bbr^k, re_{k+1})\! =\! r^k \! \intl_{S^k_+}\! f_g
\Big (\frac{r\theta}{\theta_{k+1}}\Big )\,
\frac{d\sig(\theta)}{\theta_{k+1}^{k+1}}, \quad r\! =\! |u|.\ee
\end{lemma}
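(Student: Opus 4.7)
The statement has two equalities, and I would handle them separately.

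For the first equality $(R_kf)(\xi,u)=(R_kf_g)(\bbr^k,re_{k+1})$, I would simply exploit the rotational equivariance of $R_k$. By the definition (\ref{uu7}), $(R_kf)(\xi,u)=\int_\xi f(y+u)\,dy$. The hypotheses on $g$ say that $g$ maps the reference pair $(\bbr^k,re_{k+1})$ to $(\xi,u)$: indeed $u=r\,g e_{k+1}=g(re_{k+1})$ and $\xi=g\bbr^k$. Changing variables $y\mapsto gy$ in the integral over $\xi$ (with unit Jacobian since $g\in O(n)$), the integrand becomes $f(gy+g(re_{k+1}))=f_g(y+re_{k+1})$, and the domain becomes $\bbr^k$. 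This gives $\int_{\bbr^k}f_g(y+re_{k+1})\,dy=(R_kf_g)(\bbr^k,re_{k+1})$.

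For the second equality, I would first perform a dilation and then apply Lemma \ref{kkooll} in the ambient space $\bbr^{k+1}=\bbr^k\oplus\bbr e_{k+1}$. Substituting $y=rx$ with $dy=r^k\,dx$ in $\int_{\bbr^k} f_g(y+re_{k+1})\,dy$ gives
\[
(R_kf_g)(\bbr^k,re_{k+1})=r^k\intl_{\bbr^k} f_g\bigl(r(x+e_{k+1})\bigr)\,dx.
\]
Now Lemma \ref{kkooll}, applied with dimension $k+1$ in place of $n$ to the function $F(x)=f_g(r(x+e_{k+1}))$ on $\bbr^k$, turns the Lebesgue integral into a hemispherical integral: the substitution $x=\theta'/\theta_{k+1}$ for $\theta=(\theta',\theta_{k+1})\in S^k_+$ sends $x+e_{k+1}$ to $\theta/\theta_{k+1}$, so $F(\theta'/\theta_{k+1})=f_g(r\theta/\theta_{k+1})$ and the Jacobian factor is $\theta_{k+1}^{-(k+1)}$. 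This produces exactly (\ref{0xxsr}).

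There is no real obstacle here; the only thing worth being careful about is the bookkeeping of the two maps ``$G$-rotation'' and ``stereographic projection''. The cleanest order is to dispose of the orientation/location of the plane first via $g\in O(n)$, and only then use the stereographic parametrization of $S^k_+$ over $\bbr^k$ supplied by Lemma \ref{kkooll}; trying to combine the two steps invites indexing confusion. All hypotheses ($u\neq 0$ so that $r>0$ is well defined and the hemispherical integral makes sense) are used in this order.
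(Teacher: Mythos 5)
Your plan is correct and follows essentially the same route as the paper: a rotation change of variables $y\mapsto gy$ to reduce to the reference pair $(\bbr^k,re_{k+1})$, then a dilation $y=rx$, then Lemma \ref{kkooll} with $k+1$ in place of $n$ to pass to the hemispherical integral. No gaps.
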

\begin{proof}  Changing variables, and using (\ref{teq2}) (with $n=k+1$), we obtain
\bea(R_kf)(\xi, u)&=&\intl_{\bbr^k} f_g(z+re_{k+1})\, dz\!=\!(R_k f_g)(\bbr^k, re_{k+1})\nonumber \\
&=&r^k \intl_{\bbr^k}\! f_g(r(z+e_{k+1}))\, dz
\!=\!r^k \!\!\intl_{S^k_+} \!\!f_g
\Big (\frac{r\theta}{\theta_{k+1}}\Big )\,
\frac{d\sig(\theta)}{\theta_{k+1}^{k+1}}. \nonumber \eea
\end{proof}

\subsection{Proof of Theorem \ref{kk5cds}}\label{ss32}
 Denote by $c_k$ the constant on the right-hand side of (\ref{3458}).

STEP 1. Let us show that  $||R_k||\le c_k$. By Lemmas \ref{mmz12} and \ref{ll7zw},
owing to rotation invariance and Minkowski's inequality for integrals, for $1\le p<\infty$ we have
\bea
&&||R_kf||_{p,\nu}=\!\Bigg (\sig_{n-k-1}\intl_0^\infty r^{n-k-1+\nu p} \,dr\!\intl_{G}
|(R_kf)_\gam (\bbr^k, re_{k+1})|^p\, d\gam \Bigg )^{1/p}\nonumber\\
&&=\Bigg (\sig_{n-k-1}\intl_0^\infty r^{n-k-1+\nu p} \,dr\intl_{G}\Bigg |r^k\intl_{S^k_+}f_\gam \left (\frac{r\theta}
{\theta_{k+1}}\right )
\,\frac{d\sig (\theta)}{\theta_{k+1}^{k+1}}\Bigg |^p \,d\gam\Bigg)^{1/p}\nonumber\\
&&\le\sig_{n-k-1}^{1/p}\intl_{S^k_+}\frac{A(\theta)}{\theta_{k+1}^{k+1}}\, d\sig (\theta). \nonumber\eea
Here, for $\nu=\mu-k/p'$,
\[ A(\theta)=\left (\intl_0^\infty r^{n-1+\mu p}\,dr\intl_{G}\Big |f_\gam \left (\frac{r\theta}
{\theta_{k+1}}\right )
\Big |^p\,d\gam\right )^{1/p}=\frac{\theta_{k+1}^{\mu + n/p}}{\sig^{1/p}_{n-1}}\, ||f||_{p,\mu}. \]
If $p=\infty$, we similarly get
\[||R_kf||_{\infty,\nu}\le  \intl_{S^k_+}\frac{A(\theta)}{\theta_{k+1}^{k+1}}\, d\sig (\theta)\]
with
\[ A(\theta)=\underset{r, \gam}{\esp} \, r^\mu\,\Big|f_\gam \left (\frac{r\theta}
{\theta_{k+1}}\right )
\Big |\le \theta_{k+1}^{\mu}\, ||f||_{\infty,\mu}.\]
This gives $||R_kf||_{p,\nu} \le c_k \,||f||_{p,\mu}$, where
\be\label{nn45b} c_k=\left (\frac{\sig_{n-k-1}}{\sig_{n-1}}\right )^{1/p}
\intl_{S^k_+}\frac{d\sig (\theta)}{\theta_{k+1}^{k+1-\mu-n/p}}, \qquad 1\le p\le\infty.\ee
 The last integral equals
\[\sig_{k-1}\intl_0^1 (1-t^2)^{k/2 -1}\, t^{\mu +n/p -k-1}dt=\frac{\pi^{k/2}\,
\Gam ((\mu +n/p -k)/2)}{\Gam ((\mu +n/p)/2)},\]
as desired.  Thus, $||R_k||\le c_k$.

STEP 2. Let us show that $||R_k||\ge c_k$.
To this end, we transform  (\ref{iiaaq}) by choosing $f$ and $g$
in a proper way.
Suppose that both $f$ and $g$ are nonnegative,   $f(x)\equiv f_0 (|x|)$,
$g(\t)\equiv g_0(|\tau|)$, and denote by $I$ the integral on the left-hand side of (\ref{iiaaq}).
 By Lemma \ref{ll7zw},
 \bea I&=&\sig_{n-k-1}\intl_0^\infty g_0(r)\,r^{n-1}\,dr\intl_{S^k_+}
 f_0 \left (\frac{r}{\theta_{k+1}}\right )\frac{d\sig (\theta)}{\theta_{k+1}^{k+1}}\nonumber\\
&=&\label{llpmv}\sig_{n-k-1}\intl_{S^k_+} \frac{d\sig (\theta)}{\theta_{k+1}^{k+1}}
\intl_0^\infty g_0(r)\,f_0 \left (\frac{r}{\theta_{k+1}}\right )\,r^{n-1}\,dr.\eea
Let $1\le p<\infty$. Then $||f||_{p,\mu}$ and $||g||_{p',-\nu}$ in (\ref{iiaaq}) have the form
\bea||f||_{p,\mu}&=&\Bigg (\sig_{n-1}\intl_0^\infty r^{n-1+\mu p}\,f_0^p (r)\,dr\Bigg )^{1/p}, \nonumber\\
||g||_{p',-\nu}&=&\Bigg (\sig_{n-k-1}\intl_0^\infty r^{n-1-\nu p'}\,g_0^{p'} (r)\,dr\Bigg )^{1/p'}.\nonumber\eea
Choose $g_0$ so that $g_0(r)=r^{\mu p}\, f_0^{p-1}$. Then
\[ ||g||_{p',-\nu}=\tilde c\, ||f||_{p,\mu}^{p-1}, \qquad \tilde c=
\left(\frac{\sig_{n-k-1}}{\sig_{n-1}}\right )^{1/p'}.\]
This equality, together with  (\ref{llpmv}) and (\ref{iiaaq}), yields
\[\sig_{n-k-1}\intl_{S^k_+} \frac{d\sig (\theta)}{\theta_{k+1}^{k+1}}
\intl_0^\infty \!\!f_0^{p-1}(r)\,f_0 \left (\frac{r}{\theta_{k+1}}\right )\,r^{n-1+\mu p}\,dr\!
\le \!\tilde c\,||R_k||\, ||f||_{p,\mu}^{p}.\]
Now we assume $f_0(r)=0$  if $r<1$ and $f_0(r)=r^{-\mu-n/p-\e}$, $\e>0$, if $r>1$.
Then $||f||_{p,\mu}^p =\sig_{n-1}/\e p$
and we have
\[
\sig_{n-k-1}\intl_{S^k_+} \frac{d\sig (\theta)}{\theta_{k+1}^{k+1-\mu-n/p-\e}}\le
\tilde c\,\sig_{n-1}\,||R_k||.\]
Passing to the limit as $\e \to 0$, we obtain
\[ ||R_k||\ge  \left(\frac{\sig_{n-k-1}}{\sig_{n-1}}\right )^{1/p}\intl_{S^k_+}
\frac{d\sig (\theta)}{\theta_{k+1}^{k+1-\mu-n/p}},\]
as desired; cf. (\ref{nn45b}).

 If $p=1$, then $\nu=\mu$. We choose $g_0(r)=r^{\mu}$ and proceed as above.
 If $p=\infty$,   we choose $f_0 (r)=r^{-\mu}$. Then   $ ||f||_{\infty,\mu}=1$ and, by (\ref{iiaaq}),
 \[\intl_{S^k_+} \frac{d\sig (\theta)}{\theta_{k+1}^{k+1-\mu}}
\intl_0^\infty g_0(r)\,r^{n-1-\mu}\,dr
\le ||R_k||\, \intl_0^\infty g_0(r)\,r^{n-1-\nu}\,dr.\]
 Let $g_0(r)=0$  if $r<1$ and $g_0(r)=r^{-\del}$, where $\del$ is big enough. Then
 \[\frac{\nu -n+\del}{\mu -n+\del}\intl_{S^k_+} \frac{d\sig (\theta)}{\theta_{k+1}^{k+1-\mu}}\le \!||R_k||.\]
 Letting $\del \to \infty$, we obtain the result. \hfill $\square$

Some comments are in order.
 In the case $p=1$, the constant (\ref{3458})
coincides with (\ref{uub5}). In the case $p=2$ it differs from that in \cite[p. 410]{Q}.
The case  $p=1$, $\mu =\nu=k-n$, is skipped in Theorem \ref{kk5cds}, though  it is included in Lemma \ref{gttgzw0}.
The reason is that the  boundedness of $R_k$ from $L^1_{k-n}(\bbr^n)$ to $L^1_{k-n}(\cgnk)$ fails to be hold. Take, for instance,
$f(x)=f_0 (|x|)$ with $f_0 (r)\equiv 0$ if $r<10$ and $f_0 (r)=r^{k-\del}$, $\del>0$, otherwise. Clearly, $f\in L^1_{k-n}(\bbr^n)$, however,
 by (\ref{990a}), $$\int_{\cgnk}(R_k f)(\t)|\t|^{k-n}\,d\t=c \intl_0^\infty \frac{dt}{t}\intl_t^\infty f_0(r)(r^2 -|\tau|^2)^{k/2 -1} r \,dr=\infty.$$

\subsection{The dual $k$-plane transform}\label{uu75df}

The dual $k$-plane transform of a function $\vp$ on $\cgnk$ is defined by the formula
\be\label{00k7g}(R_k^*\vp)(x)=\intl_{G}\vp(\g\bbr^k +x)  \,d\g, \quad x \in \rn, \ee
and satisfies the duality relation \cite{He, Ru04b}
\be\label{ionb}\intl_{\rn}(R_k^*\vp)(x)\,f(x)\,dx=\intl_{\cgnk}(R_kf)(\t)\,\vp(\t)\,d\t.\ee
The following lemma gives precise information about the $L^1$ case.

\begin{lemma} \label{hhaw1} \cite[Theorem 2.3]{Ru04b}   Let
\be\label{uub5d} \tilde \lam_\mu =\frac{\pi^{k/2} \, \Gam (-\mu/2)}{\Gam ((k-\mu)/2)},
\qquad \mu<0, \qquad \varkappa \in \{0,1\}.\ee
Then
\be\label{hhyt1d} \irn (R_k^*\vp)(x) \,(\varkappa+|x|^2)^{(\mu-k)/2} \,dx
=\tilde \lam_\mu  \icgr  \vp(\tau)\,  (\varkappa+|\tau|^2)^{\mu/2}\,d\tau,\ee
 provided that
either side of the  equality exists in the Lebesgue
sense.
\end{lemma}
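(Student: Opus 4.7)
My plan is to apply the duality relation (\ref{ionb}) to the specific radial function $f(x)=(\varkappa+|x|^2)^{(\mu-k)/2}$. Under this choice, the left-hand side of (\ref{hhyt1d}) is $\icgr(R_kf)(\tau)\,\vp(\tau)\,d\tau$, so the lemma reduces to the pointwise identity
\[ (R_kf)(\tau)=\tilde\lam_\mu\,(\varkappa+|\tau|^2)^{\mu/2}. \]

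To compute $(R_kf)(\tau)$, I parameterize $\tau=(\xi,u)$ with $u\in\xi^\perp$. Each point on $\tau$ has the form $y+u$ with $y\in\xi$, and orthogonality gives $|y+u|^2=|y|^2+|\tau|^2$. Passing to polar coordinates in $\xi\cong\bbr^k$ and substituting $s=|y|^2$ converts $(R_kf)(\tau)$ into the Beta-type integral
\[ \frac{\sig_{k-1}}{2}\intl_0^\infty(\varkappa+|\tau|^2+s)^{(\mu-k)/2}\,s^{k/2-1}\,ds. \]
Euler's formula $\int_0^\infty(a+s)^{-\alpha}s^{\beta-1}\,ds=a^{\beta-\alpha}\Gam(\beta)\Gam(\alpha-\beta)/\Gam(\alpha)$, applied with $\alpha=(k-\mu)/2$, $\beta=k/2$, $a=\varkappa+|\tau|^2$, combined with $\sig_{k-1}=2\pi^{k/2}/\Gam(k/2)$, produces exactly $\tilde\lam_\mu(\varkappa+|\tau|^2)^{\mu/2}$ after the factors of $\Gam(k/2)$ cancel.

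The ``provided that either side exists'' clause is handled by Tonelli's theorem: since all integrands are nonnegative, both iterated integrals agree as elements of $[0,\infty]$, so finiteness of one side is equivalent to finiteness of the other, with equal value. The only substantive check is convergence of the Beta integral. Near $s=0$ integrability is automatic because $k/2>0$; near $s=\infty$ the integrand behaves like $s^{\mu/2-1}$, which is integrable iff $\mu<0$. This matches the hypothesis of the lemma exactly, so no real obstacle arises — the proof reduces to a single direct computation. In the case $\varkappa=0$, the integrand is finite on the full-measure subset $\{\tau:|\tau|>0\}$ of $\cgnk$, which is all that is needed.
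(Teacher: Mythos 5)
Your proposal is correct. The paper itself gives no proof of this lemma: it is quoted verbatim from \cite[Theorem 2.3]{Ru04b}, so there is nothing internal to compare against. Your derivation is a clean, self-contained substitute: the duality relation (\ref{ionb}) with $f(x)=(\varkappa+|x|^2)^{(\mu-k)/2}$ reduces the claim to the pointwise identity $(R_kf)(\tau)=\tilde\lam_\mu(\varkappa+|\tau|^2)^{\mu/2}$, and your Beta-integral computation checks out: with $a=\varkappa+|\tau|^2$ one gets $\tfrac{\sig_{k-1}}{2}\,a^{\mu/2}\,\Gam(k/2)\Gam(-\mu/2)/\Gam((k-\mu)/2)$, and $\sig_{k-1}=2\pi^{k/2}/\Gam(k/2)$ cancels the $\Gam(k/2)$ to give exactly $\tilde\lam_\mu$. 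The convergence analysis ($k/2>0$ at $s=0$, $s^{\mu/2-1}$ at infinity forcing $\mu<0$) matches the hypothesis, and the $\varkappa=0$ degeneracy at $|\tau|=0$ is indeed negligible. One small point to tighten: $\vp$ in the lemma is not assumed nonnegative, so Tonelli does not apply directly to the stated identity; the standard fix is to run your argument first with $|\vp|$ (which shows that absolute convergence of one side is equivalent to that of the other) and then invoke Fubini for the signed case. This is routine and does not affect the validity of the approach.
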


In the $L^p$ case, the scaling argument    yields:
\begin{lemma} \label{pplkjd} Let $1\le p \le \infty$, $1/p+1/p'=1$.
If $||R_k^*\vp||_{p,\nu} \le c \,||\vp||_{p, \mu}$ for a nonnegative
function $\vp\not\equiv 0$ and a constant $c>0$ independent of $\vp$, then $\nu=\mu-k/p$.
\end{lemma}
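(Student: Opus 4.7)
The plan is to mimic the scaling argument used in Lemma \ref{pplkj0}, but now track the different homogeneity of $R_k^*$ (which maps to a function on $\bbr^n$ rather than on $\cgnk$). For $t>0$ and a nonnegative $\vp\not\equiv 0$ on $\cgnk$, I introduce the dilated function $\vp_t(\xi,u)=\vp(\xi,tu)$. The key observation is that $R_k^*$ intertwines this dilation with the ordinary Euclidean dilation on $\bbr^n$: using the definition \eqref{00k7g} and the fact that a $k$-plane $\gam\bbr^k+x$ scaled by $t$ equals $\gam\bbr^k+tx$, one gets
\[
(R_k^*\vp_t)(x)=\intl_G \vp(\gam\bbr^k+tx)\,d\gam=(R_k^*\vp)(tx).
\]

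Next I would compute how each side of the assumed inequality transforms. A direct change of variables $u\mapsto u/t$ in the fiber $\xi^\perp$ gives
\[
\|\vp_t\|_{p,\mu}=t^{-\mu-(n-k)/p}\,\|\vp\|_{p,\mu},
\]
while the change $x\mapsto x/t$ in $\bbr^n$ yields
\[
\|R_k^*\vp_t\|_{p,\nu}=t^{-\nu-n/p}\,\|R_k^*\vp\|_{p,\nu}
\]
(with the obvious modifications $1/p=0$ when $p=\infty$, where the integrals become essential suprema). Substituting $\vp_t$ into the hypothesized inequality $\|R_k^*\vp_t\|_{p,\nu}\le c\,\|\vp_t\|_{p,\mu}$ and rearranging gives
\[
\|R_k^*\vp\|_{p,\nu}\le c\,t^{\nu-\mu+k/p}\,\|\vp\|_{p,\mu}.
\]

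Since $\vp$ is nonnegative and not identically zero, $R_k^*\vp$ is also nonnegative and not identically zero, so $\|R_k^*\vp\|_{p,\nu}$ is a strictly positive constant independent of $t$. Letting $t\to 0^+$ forces $\nu-\mu+k/p\le 0$, while letting $t\to\infty$ forces $\nu-\mu+k/p\ge 0$; hence $\nu=\mu-k/p$, as claimed.

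The only mild subtlety is ensuring $\|R_k^*\vp\|_{p,\nu}>0$ (so that we can legitimately let $t$ escape to $0$ and $\infty$), but this is immediate from the nonnegativity and nontriviality of $\vp$ together with the transitive action of $G$; the rest is the routine homogeneity bookkeeping, so I do not expect any real obstacle.
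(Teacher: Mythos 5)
Your argument is correct and is precisely the scaling argument that the paper invokes (the paper states Lemma \ref{pplkjd} with only the remark that ``the scaling argument yields'' it); the intertwining identity $(R_k^*\vp_t)(x)=(R_k^*\vp)(tx)$ and the two homogeneity exponents $-\mu-(n-k)/p$ and $-\nu-n/p$ are all computed correctly, and the limits $t\to0^+$, $t\to\infty$ give $\nu=\mu-k/p$. No discrepancy with the paper's intended proof.
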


The following statement is dual to Theorem \ref{kk5cds}.
\begin{theorem} \label{kk5cdsd} Let $1\le p \le \infty$,  $\; 1/p+1/p'=1$, $\nu=\mu-k/p$,
$\mu<(n-k)/p'$. Then the dual $k$-plane transform $R_k^*$ is a linear bounded
operator from $L^p_\mu(\cgnk)$ to $L^p_\nu(\rn)$ with the norm
\be\label{3458d}
||R_k^*||=\pi^{k/2}\,\left (\frac{\sig_{n-k-1}}{\sig_{n-1}}\right )^{1/p'}\,
\frac{\Gam (((n-k)/p' -\mu)/2)}{\Gam ((n/p'+k/p -\mu)/2)}
.\ee
\end{theorem}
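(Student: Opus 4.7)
The plan is to derive Theorem \ref{kk5cdsd} from Theorem \ref{kk5cds} via the duality relation (\ref{ionb}). Since the weighted Hölder inequality identifies the topological dual of $L^p_\mu(\cgnk)$ (under the pairing $\int \vp\,g\,d\tau$) with $L^{p'}_{-\mu}(\cgnk)$, and similarly for the spaces on $\bbr^n$, I expect to prove
\[
\|R_k^*\|_{L^p_\mu(\cgnk)\to L^p_\nu(\bbr^n)}=\|R_k\|_{L^{p'}_{-\nu}(\bbr^n)\to L^{p'}_{-\mu}(\cgnk)},
\]
and then read off the right-hand side from (\ref{3458}) applied with $p$ replaced by $p'$ and input weight $-\nu$.

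First I would check that the hypotheses of the two theorems correspond correctly. The necessary scaling relation $\nu=\mu-k/p$ from Lemma \ref{pplkjd} matches the Theorem \ref{kk5cds} relation between its input and output weights, namely $-\mu=(-\nu)-k/p$. The condition $-\nu>k-n/p'$ required to apply Theorem \ref{kk5cds} becomes $\mu-k/p<n/p'-k$, i.e., $\mu<(n-k)/p'$, which is precisely the hypothesis of Theorem \ref{kk5cdsd}. Substituting $A=-\nu$ into (\ref{3458}), the arguments of the Gamma functions simplify to $(A+n/p'-k)/2=((n-k)/p'-\mu)/2$ and $(A+n/p')/2=(n/p'+k/p-\mu)/2$, recovering (\ref{3458d}) on the nose.

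Next I would establish both inequalities $\|R_k^*\|\le\|R_k\|$ and $\|R_k^*\|\ge\|R_k\|$ in the weighted norms above. For $\le$, given $\vp\in L^p_\mu(\cgnk)$ and $f\in L^{p'}_{-\nu}(\bbr^n)$, the identity (\ref{ionb}) combined with Hölder's inequality on $\cgnk$ yields
\[
\Big|\intl_{\rn}(R_k^*\vp)\,f\,dx\Big|=\Big|\intl_{\cgnk}(R_kf)\,\vp\,d\tau\Big|\le \|R_kf\|_{p',-\mu}\,\|\vp\|_{p,\mu}\le \|R_k\|\,\|f\|_{p',-\nu}\,\|\vp\|_{p,\mu}.
\]
Taking the supremum over $\|f\|_{p',-\nu}\le 1$ via the weighted $L^{p'}$--$L^p$ duality yields $\|R_k^*\vp\|_{p,\nu}\le\|R_k\|\,\|\vp\|_{p,\mu}$. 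The opposite inequality is obtained by reversing the chain: start from the dual characterization of $\|R_kf\|_{p',-\mu}$, push $\vp$ through (\ref{ionb}), and apply Hölder on $\bbr^n$ with the supremum taken over $\|\vp\|_{p,\mu}\le 1$.

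I expect the main obstacle to be the endpoint $p=1$ (equivalently $p'=\infty$), where the natural dual of $L^{p'}_{-\nu}$ is strictly larger than $L^p_\nu$ and one cannot simply quote norm duality. For $p=1$ one has $\nu=\mu$, and I would confirm the result directly from the identity (\ref{hhyt1d}) in Lemma \ref{hhaw1}, checking that $\tilde\lam_\mu$ matches (\ref{3458d}) with $p=1$. As a self-contained alternative that sidesteps these duality subtleties entirely, one could mimic the two-step structure of \S\ref{ss32}: applying Minkowski's inequality directly to (\ref{00k7g}) produces the upper bound, while a radial extremizer of the form $\vp_0(\tau)=|\tau|^{-\mu-(n-k)/p'-\varepsilon}\chi_{\{|\tau|>1\}}$ (with $\varepsilon\to 0^+$) attains the lower bound in (\ref{iiaaq}) transposed to the $R_k^*$ side.
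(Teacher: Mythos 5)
Your proposal follows the paper's own proof exactly: Theorem \ref{kk5cdsd} is deduced from Theorem \ref{kk5cds} via the duality relation (\ref{ionb}), which shows that $R_k: L^{p'}_{-\nu}(\bbr^n)\to L^{p'}_{-\mu}(\cgnk)$ and $R_k^*: L^{p}_{\mu}(\cgnk)\to L^{p}_{\nu}(\bbr^n)$ are bounded simultaneously with equal norms, and your parameter bookkeeping (hypothesis $\mu<(n-k)/p'$ and the Gamma-function arguments) reproduces (\ref{3458d}) correctly. One small slip in your aside on the endpoint: at $p=1$ the relation $\nu=\mu-k/p$ gives $\nu=\mu-k$, not $\nu=\mu$; the direct verification against Lemma \ref{hhaw1} with $\varkappa=0$ still works and yields exactly $\tilde\lam_\mu$.
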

\begin{proof} By the duality  (\ref{ionb}),
operators  $R_k: L^p_\mu (\bbr^n)\to L^p_\nu(\cgnk)$  and $ R_k^*: L^{p'}_{-\nu}(\cgnk) \to L^{p'}_{-\mu}(\bbr^n)$
are bounded simultaneously and their norms coincide.
 Hence, replacing $p$ by  $p'$ and making  obvious changes in the statement
 of Theorem \ref{kk5cds},    we obtain the result.
\end{proof}

\section{The $j$-plane to $k$-plane transform}
This transform is defined by (\ref{uu7j}). The reader is referred to \cite{Ru04d} for additional information.

\subsection{Preparations}

 \begin{lemma} \label{00125j} \mbox{\rm (cf. Theorem 2.5 in \cite{Ru04d})}  Let $\,0\le j<k<n$,
\be\label{uub5j} \lam_{j,\mu}=\frac{ \Gam ((n-k+\mu)/2) \, \Gam ((n-j)/2)}{\Gam ((n+\mu-j)/2)
\, \Gam ((n-k)/2)}, \qquad \mu>k-n.\ee
Then
\be\label{hhytj} \intl_{\cgnk} (R_{j,k}f)(\t) \,|\t|^{\mu}\,
d\t =\lam_{j,\mu}  \intl_{\cgnj} f(\z) \,|\z|^{\mu} \,d\z, \ee
\be\label{hhyt1j} \intl_{\cgnk} \frac{(R_{j,k}f)(\t) \,|\t|^{\mu}}{(1+|\t|^2)^{(n+\mu-j)/2}}\,
d\t =\lam_{j,\mu}    \intl_{\cgnj} \frac{f(\z) \,|\z|^{\mu}}{(1+|\z|^2)^{(n+\mu-k)/2}} \,d\z,\ee
provided that
either side of the corresponding equality exists in the Lebesgue
sense.
\end{lemma}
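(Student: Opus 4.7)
The plan is to reduce both identities to the corresponding identities (\ref{hhyt}) and (\ref{hhyt1}) of Lemma \ref{00125}, applied in the ambient space $\bbr^{n-j}$ with subspace dimension $k-j$. The mechanism is a Fubini-type interchange on the flag manifold $\{(\eta,\xi):\eta\in G_{n,j},\,\xi\in\gnk,\,\eta\subset\xi\}$, which turns the $R_{j,k}$-integral on the LHS into an ordinary $R_{k-j}$-integral in each fiber $\eta^\perp\cong\bbr^{n-j}$.

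First I would unfold the LHS of (\ref{hhytj}) using the definition (\ref{uu7j}) as
\[
\intl_{\gnk} d\xi \intl_{\xi^\perp} |u|^\mu\, du \intl_{\eta\subset\xi} d_\xi\eta \intl_{\eta^\perp\cap\xi} f(\eta,y+u)\, dy,
\]
and interchange the order so that $\eta$ ranges over $G_{n,j}$ first and $\xi$ then ranges over the $k$-planes containing $\eta$. By uniqueness of the $O(n)$-invariant probability measure on the flag manifold, $d\xi\otimes d_\xi\eta=d\eta\otimes d_\eta\xi$, and the bijection $\xi\leftrightarrow\xi':=\xi\cap\eta^\perp$ identifies $d_\eta\xi$ with the invariant probability measure on the Grassmannian of $(k-j)$-planes in $\eta^\perp\cong\bbr^{n-j}$. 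For fixed $\eta$, the variables $y\in\xi'$ and $u\in(\xi')^\perp\cap\eta^\perp$ combine to $v:=y+u\in\eta^\perp$, and $|u|$ is precisely the distance from $v$ to $\xi'$ inside $\eta^\perp$. Setting $f_\eta(v):=f(\eta,v)$, the inner integral becomes
\[
\intl_{\Pi_{n-j,\,k-j}} (R_{k-j}f_\eta)(\t')\, |\t'|^\mu\, d\t',
\]
to which (\ref{hhyt}) with $(n,k)$ replaced by $(n-j,k-j)$ applies directly. A substitution shows the resulting Gamma quotient is exactly $\lam_{j,\mu}$ from (\ref{uub5j}), so integrating in $\eta\in G_{n,j}$ recovers (\ref{hhytj}). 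Identity (\ref{hhyt1j}) follows along the identical route, invoking (\ref{hhyt1}) in place of (\ref{hhyt}); the exponents $(n+\mu-j)/2$ and $(n+\mu-k)/2$ on the two sides translate exactly into the exponents of (\ref{hhyt1}) applied in dimension $n-j$ with subspace dimension $k-j$.

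The principal obstacle is the measure-theoretic step: verifying the flag-manifold identity $d\xi\otimes d_\xi\eta=d\eta\otimes d_\eta\xi$ together with the reduction of $d_\eta\xi$ to the invariant probability measure on $G(\eta^\perp,k-j)$. Both follow from $O(n)$-transitivity on the flag manifold and uniqueness of invariant measures, but one must also check compatibility with the non-compact integrations in $u$, $y$, $v$: the orthogonal splitting $\eta^\perp=\xi'\oplus\xi^\perp$ ensures that the product Lebesgue measures satisfy $du\,dy=dv$, so Fubini is legitimate under the Lebesgue existence hypothesis stated in the lemma.
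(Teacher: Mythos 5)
Your argument is correct, but note that the paper does not actually prove this lemma: it is imported verbatim as Theorem 2.5 of \cite{Ru04d}, so there is no in-paper proof to match against. What you supply is a genuine, self-contained derivation that reduces the $j$-plane to $k$-plane case to the already-quoted $k$-plane case (Lemma \ref{00125}) applied in the ambient space $\eta^\perp\cong\bbr^{n-j}$ with subspace dimension $k-j$. All the ingredients check out: the flag-manifold identity $d\xi\otimes d_\xi\eta=d\eta\otimes d_\eta\xi$ follows from uniqueness of the $O(n)$-invariant probability measure on $\{(\eta,\xi):\eta\subset\xi\}$; the pushforward of $d_\eta\xi$ under $\xi\mapsto\xi\cap\eta^\perp$ is the invariant probability measure on the Grassmannian of $(k-j)$-subspaces of $\eta^\perp$ by $O(n-j)$-equivariance; the orthogonal splitting $\eta^\perp=(\xi\cap\eta^\perp)\oplus\xi^\perp$ gives $du\,dy=dv$ and $|\tau|=|u|=|\tau'|$; and substituting $(n,k)\to(n-j,k-j)$ into (\ref{uub5}) reproduces exactly $\lam_{j,\mu}$ of (\ref{uub5j}), with the condition $\mu>(k-j)-(n-j)=k-n$ and the exponents $(n+\mu-j)/2$, $(n+\mu-k)/2$ in (\ref{hhyt1j}) matching those of (\ref{hhyt1}) in dimension $n-j$. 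The Tonelli-for-$|f|$-then-Fubini step handles the ``either side exists'' hypothesis. Your route is also consistent in spirit with the paper's own slicing of $R_{j,k}$ in Lemma \ref{ll7zw0}, where the operator is likewise decomposed into a $(k-j)$-dimensional integral over $\bbr^{k-j}\subset\eta^\perp$ after averaging over $O(k)$. What your approach buys is that Lemma \ref{00125j} need not be cited as an independent external result: it becomes a corollary of Lemma \ref{00125}, at the modest cost of the measure-theoretic bookkeeping on the flag manifold, which you correctly identify and justify.
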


\begin{lemma}\label{gttgzw}  Let $f\in L^p_\mu(\cgnj)$, $1\le p \le \infty$, and suppose that
\be\label {78lm9j} \mu\,\left \{ \begin{array} {ll} >k-n/p -j/p'& \mbox{if $\;1< p\le \infty$,}\\
\ge k-n & \mbox{if $\;p=1$.}\\
\end{array}\right.\ee
Then $(R_{j,k}f)(\t)$ is finite for almost all $\tau\in  \cgnk$.
 If (\ref{78lm9j}) fails, then there is a function $f_0\in  L^p_\mu(\cgnj)$ such
 that $(R_{j,k}f)(\tau)\equiv \infty$. For instance,
\be\label{gttgzw1}
f_0 (\z)=\frac{|\z|^{-\mu}\,(2+|\z|)^{(j-n)/p}}{\log^{1/p+\del} (2+|\z|)}, \ee
where  $0<\del <1/p'$ if $1< p\le \infty$, and any $\del>0$ if $p=1$.
\end{lemma}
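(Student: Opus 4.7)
The plan is to closely follow the argument for Lemma \ref{gttgzw0}, substituting the $R_{j,k}$-identities from Lemma \ref{00125j} for those from Lemma \ref{00125}. For the a.e.\ finiteness under (\ref{78lm9j}), I would start from the explicit identity (\ref{hhyt1j}) applied to $|f|$ and estimate the right-hand side by H\"older's inequality when $1 < p \le \infty$:
\[
\intl_{\cgnj} \frac{|f(\zeta)|\,|\zeta|^\mu}{(1+|\zeta|^2)^{(n+\mu-k)/2}}\,d\zeta
\le \|f\|_{p,\mu} \Bigg(\intl_{\cgnj} \frac{d\zeta}{(1+|\zeta|^2)^{p'(n+\mu-k)/2}}\Bigg)^{1/p'}.
\]
The auxiliary integral on $\cgnj$ factors as a probability measure on $G_{n,j}$ times a radial integral over the $(n-j)$-dimensional fiber, so it converges at infinity iff $p'(n+\mu-k) > n-j$, i.e., $\mu > k - n/p - j/p'$. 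This forces the left-hand side of (\ref{hhyt1j}) to be finite, whence $(R_{j,k}f)(\tau) < \infty$ a.e.

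For the boundary case $p = 1$, $\mu = k - n$, I would imitate the $\varepsilon$-shift trick from Lemma \ref{gttgzw0}: set $\tilde \mu = k - n + \varepsilon$ with $\varepsilon > 0$ and observe that
\[
\intl_{\cgnj}\frac{|f(\zeta)|\,|\zeta|^{\tilde\mu}}{(1+|\zeta|^2)^{(n+\tilde\mu-k)/2}}\,d\zeta
= \intl_{\cgnj}\frac{|f(\zeta)|}{|\zeta|^{n-k}}\cdot\frac{|\zeta|^\varepsilon}{(1+|\zeta|^2)^{\varepsilon/2}}\,d\zeta
\le \|f\|_{1,k-n} < \infty,
\]
so (\ref{hhyt1j}) with parameter $\tilde\mu$ yields a.e.\ finiteness.

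For the sharpness claim (that $f_0$ in (\ref{gttgzw1}) has $(R_{j,k}f_0)(\tau) \equiv \infty$ when (\ref{78lm9j}) fails), I would first derive an Abel-type representation analogous to (\ref{990a}). Parametrizing each $j$-subplane $\zeta \subset \tau$ by its direction $\eta \subset \xi$ and a translation $v \in \eta^\perp \cap \xi$ inside $\tau$, Pythagoras gives $|\zeta|^2 = |\tau|^2 + |v|^2$ since the foot $u$ of the perpendicular from the origin to $\tau$ satisfies $u \perp \xi$; polar coordinates in the $(k-j)$-dimensional space $\eta^\perp \cap \xi$ then collapse (\ref{uu7j}) to
\[
(R_{j,k}f)(\tau) = \sig_{k-j-1}\intl_{|\tau|}^\infty F(r)\,(r^2 - |\tau|^2)^{(k-j)/2 - 1}\,r\,dr,\qquad f(\zeta)\equiv F(|\zeta|).
\]
Substituting the radial $f_0$ from (\ref{gttgzw1}) and reading off the asymptotic exponent of $r$ as $r\to\infty$, at the borderline $\mu = k - n/p - j/p'$ the integrand behaves like $r^{-1}\log^{-1/p-\delta}(r)$, which is non-integrable precisely for $0 < \delta \le 1/p'$ (for $p=1$ the borderline is $\mu = k-n$, and any $\delta > 0$ works); if $\mu$ is strictly below the borderline, the divergence is already polynomial. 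A parallel radial computation in $\cgnj$ using $d\zeta = d\xi\,du$ shows that $f_0 \in L^p_\mu(\cgnj)$ (the radial integrand is $r^{-1}\log^{-1-\delta p}(r)$ near infinity, which is integrable).

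The main obstacle I expect is the clean derivation of the Abel-type formula: it requires carefully disintegrating the joint measure $d_\xi\eta\,dy$ from (\ref{uu7j}) into a probability measure on the $j$-Grassmannian of $\xi$ together with polar coordinates on the fiber $\eta^\perp\cap\xi$, and verifying via Pythagoras that the integrand depends only on $r = \sqrt{|\tau|^2 + |v|^2}$; the formula is in any case available from \cite{Ru04d} if one prefers to cite it.
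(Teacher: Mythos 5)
Your proposal is correct and follows exactly the route the paper takes: the paper's proof consists of the single instruction to mimic Lemma \ref{gttgzw0}, applying H\"older's inequality to (\ref{hhyt1j}) for the a.e.\ finiteness (together with the same $\varepsilon$-shift for $p=1$, $\mu=k-n$) and invoking the Abel-type radial formula from \cite{Ru04d} for the counterexample. Your exponent bookkeeping (convergence of the auxiliary integral iff $\mu>k-n/p-j/p'$, the borderline integrand $r^{-1}\log^{-1/p-\delta}(r)$, and the membership $f_0\in L^p_\mu(\cgnj)$) all checks out, and your derivation of the Abel formula via Pythagoras on the fiber $\eta^\perp\cap\xi$ is sound, though the paper simply cites it.
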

\begin{proof} The proof mimics that of Lemma \ref{gttgzw0}, using  H\"older's
inequality in (\ref{hhyt1j}) and the  known formula for radial functions \cite[p. 5051]{Ru04d}:
 \[(R_{j,k}f)(\t)= \sig_{k-j-1}\intl_{|\t|}^\infty  f_1(r)(r^2 -|\t|^2)^{(k-j)/2 -1} r \,dr,
 \quad f(\z)\equiv f_1 (|\z|). \]
 \end{proof}

The  scaling argument (in the fibers) yields the following statement.
\begin{lemma} \label{pplkj}Let $1\le p \le \infty$, $1/p+1/p'=1$.  If
$||R_{j,k}f||_{p,\nu} \le c \,||f||_{p,\mu}$
for a nonnegative function $f\not\equiv 0$ and a constant $c>0$ independent of $f$, then $\nu=\mu-(k-j)/p'$.
\end{lemma}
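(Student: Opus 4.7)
The plan is a standard scaling argument carried out in the fiber coordinate. First I would introduce the family of fiber dilations $f^\lambda(\eta,v)=f(\eta,\lambda v)$, $\lambda>0$, $\eta\in G_{n,j}$, $v\in\eta^\perp$, and work out how $R_{j,k}$ intertwines with this family. Because the innermost integration in (\ref{uu7j}) is taken over $\eta^\perp\cap\xi$, which has dimension $k-j$, the change of variables $y\mapsto\lambda y$ in that integral gives
\[
(R_{j,k}f^\lambda)(\xi,u)=\lambda^{-(k-j)}\,(R_{j,k}f)(\xi,\lambda u).
\]

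Next I would track the effect of the dilation on the two weighted norms. The substitution $v\mapsto\lambda v$ on the $(n-j)$-dimensional fiber $\eta^\perp$, and, for $R_{j,k}f^\lambda$, the substitution $u\mapsto\lambda u$ on the $(n-k)$-dimensional fiber $\xi^\perp$ (or, for $p=\infty$, the essential supremum formula (\ref{LLpb})) yield
\[
\|f^\lambda\|_{p,\mu}=\lambda^{-\mu-(n-j)/p}\,\|f\|_{p,\mu},\qquad
\|R_{j,k}f^\lambda\|_{p,\nu}=\lambda^{-(k-j)-\nu-(n-k)/p}\,\|R_{j,k}f\|_{p,\nu}.
\]

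Substituting these two identities into the hypothesis $\|R_{j,k}f^\lambda\|_{p,\nu}\le c\,\|f^\lambda\|_{p,\mu}$, the inequality becomes $\lambda^{a}A\le \lambda^{b}B$ with $A=\|R_{j,k}f\|_{p,\nu}$ and $B=c\,\|f\|_{p,\mu}$ strictly positive and finite (one may truncate $f$ if necessary, or invoke Lemma \ref{gttgzw} to ensure finiteness, and use $f\not\equiv 0$ to ensure positivity). Such an inequality can hold for \emph{all} $\lambda>0$ only if the two exponents coincide, so I would equate
\[
-(k-j)-\nu-(n-k)/p=-\mu-(n-j)/p
\]
and solve for $\nu$, arriving at $\nu=\mu-(k-j)/p'$.

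There is no real obstacle: the only subtlety is the bookkeeping of the three distinct fiber dimensions, $k-j$ (fiber $\eta^\perp\cap\xi$ over which $R_{j,k}$ integrates), $n-j$ (fiber $\eta^\perp$ in the norm on $\cgnj$), and $n-k$ (fiber $\xi^\perp$ in the norm on $\cgnk$). These replace the single dimension $k$ appearing in the scaling of $R_k$ in Lemma \ref{pplkj0}; setting $j=0$ recovers $\nu=\mu-k/p'$ as a consistency check.
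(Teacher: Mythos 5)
Your proposal is correct and is precisely the ``scaling argument in the fibers'' that the paper invokes (without writing out) for Lemma \ref{pplkj}: the intertwining relation $(R_{j,k}f^\lambda)(\xi,u)=\lambda^{-(k-j)}(R_{j,k}f)(\xi,\lambda u)$, the two norm scalings with fiber dimensions $n-j$ and $n-k$, and the resulting exponent identity all check out and give $\nu=\mu-(k-j)/p'$. No gaps worth noting.
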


To obtain an analogue of  (\ref{0xxsr}) for   $R_{j,k}f$, we denote
 \[  \bbr^{j}\!= \!\bbr e_{1}  \oplus \cdots \oplus \bbr e_j, \quad
  \bbr^{k-j}\!= \!\bbr e_{j+1}  \oplus \cdots \oplus \bbr e_k, \quad \bbr^{k-j+1}\!=
  \!\bbr^{k-j}\oplus \bbr e_{k+1},\]
   \[S^{k-j}\!=\!S^{n-1} \cap \bbr^{k-j+1}, \quad S^{k-j}_+ \!=\!\{ \theta \!=\!(\theta_{j+1},
   \cdots ,\theta_{k+1})\!\in\! S^{k-j}:\;
  \theta_{k+1}\!>\!0\}.\]

\begin{lemma} \label{ll7zw0} Let $(R_{j,k}f)(\xi, u)$ be the transformation (\ref{uu7j}) with $u\neq 0$.
If $g\in G$ satisfies
$ g\, \bbr^k = \xi$ and $g\,e_{k+1}= u/|u|$, then
\be (R_{j,k}f)(\xi, u) =r^{k-j}  \intl_{O(k)} d\gam \intl_{S^{k-j}_+}f_{g\gam}
\Big (\bbr^j, \frac{r\theta}{\theta_{k+1}}\Big )\,
\frac{d\sig(\theta)}{\theta_{k+1}^{k-j+1}}, \quad r=|u|.\ee
\end{lemma}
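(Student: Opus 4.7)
\medskip

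\noindent\textbf{Proof plan for Lemma \ref{ll7zw0}.} The strategy is the same one that worked for $R_k$ in Lemma \ref{ll7zw}: use the $O(n)$-equivariance to move $(\xi,u)$ to a standard position, unfold the integration over $j$-subplanes into an average over $O(k)$, and then apply the hemispherical identity of Lemma \ref{kkooll} to the remaining Euclidean integral in the fiber.

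First, by the definition (\ref{uu7j}) of $R_{j,k}$ and the natural $O(n)$-equivariance (the measures $d_\xi\eta$ and $dy$ on the inner integration domain are invariant under orthogonal transformations), the choice of $g \in G$ with $g\,\bbr^k = \xi$ and $g\, e_{k+1} = u/|u|$ lets us write
\[
(R_{j,k}f)(\xi,u) = (R_{j,k}f_g)(\bbr^k, re_{k+1}) = \intl_{\eta\subset\bbr^k}d_{\bbr^k}\eta \intl_{\eta^\perp\cap\bbr^k} f_g(\eta, y+re_{k+1})\,dy,
\]
where $r = |u|$.

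Next, the $O(k)$-invariant probability measure on $G_{k,j}$ is the push-forward of the Haar measure on $O(k)$ under $\gamma\mapsto \gamma\bbr^j$, so the outer integral becomes an average over $\gamma\in O(k)$. Since $\gamma\in O(k)$ extends trivially by fixing $e_{k+1}$, the substitution $y=\gamma \tilde y$ (with $\tilde y\in\bbr^{k-j}=(\bbr^j)^\perp\cap\bbr^k$) turns the plane $(\gamma\bbr^j,\gamma \tilde y + re_{k+1}) = \gamma\cdot(\bbr^j,\tilde y+re_{k+1})$, so the integrand becomes $f_{g\gamma}(\bbr^j,\tilde y + re_{k+1})$. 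Thus
\[
(R_{j,k}f)(\xi,u)=\intl_{O(k)}d\gamma \intl_{\bbr^{k-j}} f_{g\gamma}(\bbr^j,\tilde y + re_{k+1})\,d\tilde y.
\]

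Finally, scale $\tilde y = r\tilde z$ in the inner integral to pull out the factor $r^{k-j}$, and apply Lemma \ref{kkooll} with the ambient dimension $k-j+1$ (so the sphere is $S^{k-j}_+$ with last coordinate $\theta_{k+1}$) to the function $\tilde z \mapsto f_{g\gamma}(\bbr^j,r(\tilde z+e_{k+1}))$. The identity $\tilde z = \theta'/\theta_{k+1}$ gives $r(\tilde z + e_{k+1}) = r\theta/\theta_{k+1}$, producing the claimed formula.

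The only real subtlety is a notational one, namely that $\gamma \in O(k)$ acts on the whole of $\bbr^n$ trivially in the $e_{k+1}$-direction so that the subscript $f_{g\gamma}$ makes sense and the second argument $\gamma\tilde y+re_{k+1}$ is genuinely $\gamma(\tilde y + re_{k+1})$; once that is checked, each of the three steps is a direct application of the invariance of the measures involved, and no delicate estimate is needed.
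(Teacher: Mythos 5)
Your proposal is correct and follows essentially the same route as the paper's own proof: pass to standard position via the equivariance under $g$, replace the integral over the $j$-subspaces of $\bbr^k$ by an average over $O(k)$, rescale by $r$, and finish with Lemma \ref{kkooll} in ambient dimension $k-j+1$. The paper compresses these steps into one displayed chain of equalities, but the content is identical, including the observation that $\gamma\in O(k)$ fixes $e_{k+1}$.
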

\begin{proof}  Changing variables, we write $(R_{j,k}f)(\xi, u)$  as
\bea
(R_{j,k}f)(g\,\bbr^k,rg\,e_{k+1})
&=&\intl_{G_{k,j}} d\tilde \eta\!\intl_{\tilde \eta^\perp\cap \bbr^k}
f_g(\tilde \eta, re_{k+1}+z)\, dz\nonumber\\
&=&r^{k-j}\intl_{O(k)} d\gam\intl_{\bbr^{k-j}}f_{g\gam}(\bbr^j, r(e_{k+1}+\tilde z))\, d\tilde z.\nonumber\eea
It remains to transform the inner integral using (\ref{teq2}).
\end{proof}

\subsection{Proof of Theorem \ref{kk5cdj}}\label{ss42}

 Denote by $c_{j,k}$ the constant on the right-hand side of (\ref{3458j}).

STEP 1. Let us show that  $||R_{j,k}||\le c_{j,k}$. By Lemmas \ref{mmz12} and \ref{ll7zw0}, for $1\le p<\infty$ we have
\bea
||R_{j,k}f||_{p,\nu}
&=&\Bigg (\sig_{n-k-1}\intl_0^\infty r^{n-k-1+\nu p} \,dr\nonumber\\
&\times&\intl_{G}\Bigg |r^{k-j}\intl_{O(k)}d\gam\intl_{S^{k-j}_+}f_{g\gam}
 \left (\bbr^j,\frac{r\theta}
{\theta_{k+1}}\right )\,\frac{d\sig (\theta)}{\theta_{k+1}^{k-j+1}}\Bigg |^p \,dg\Bigg)^{1/p}\nonumber\\
&\le&\sig_{n-k-1}^{1/p}\intl_{O(k)}d\gam\intl_{S^{k-j}_+}\frac{A_\gam(\theta)}{\theta_{k+1}^{k-j+1}}\, d\sig (\theta), \quad \nu=\mu-(k-j)/p',\nonumber\eea
where
\[ A^p_\gam(\theta)=\intl_0^\infty r^{n-j-1+\mu p}\,dr\intl_{G}\Big |f_{g\gam} \left (\bbr^j,\frac{r\theta}
{\theta_{k+1}}\right )
\Big |^p\,dg=\frac{\theta_{k+1}^{n-j+\mu p}}{\sig_{n-j-1}}\, ||f||^p_{p,\mu}. \]
Hence, $||R_{j,k}f||_{p,\nu} \!\le \!c_{j,k} \,||f||_{p,\mu}$,
\be\label{nn45bj} c_{j,k}\!=\!\left (\frac{\sig_{n-k-1}}{\sig_{n-j-1}}\right )^{1/p}\intl_{S^{k-j}_+}
\frac{d\sig (\theta)}{\theta_{k+1}^{k+1-\mu-n/p-j/p'}}.\ee
This result also covers the case $p=\infty$, when the calculation is straightforward.
The last integral  gives the constant in (\ref{3458j}).

STEP 2. To prove that $||R_{j,k}||\ge c_{j,k}$, we proceed as in the proof of Theorem \ref{kk5cds}
  and use the relevant analogue of  (\ref{iiaaq}).
Let $f$ and $g$ be nonnegative  radial functions,   $f(\z)\equiv f_0 (|\z|)$,  $g(\t)\equiv g_0(|\tau|)$.
 Then
 \[\intl_{\cgnk}\!(R_{j,k}f)(\tau)\,g(\tau)\,d\tau\!=\!\sig_{n-k-1}\intl_{S^{k-j}_+}
 \frac{d\sig (\theta)}{\theta_{k+1}^{k-j+1}}
 \intl_0^\infty \!g_0(r)\,f_0 \left (\frac{r}{\theta_{k+1}}\right )\,r^{n-j-1}\,dr.\]
If $1<p<\infty$, then
\bea ||f||_{p,\mu}&=&\Bigg (\sig_{n-j-1}\intl_0^\infty r^{n-j-1+\mu p}\,f_0^p (r)\,dr\Bigg )^{1/p}, \nonumber\\
||g||_{p',-\nu}&=&\Bigg (\sig_{n-k-1}\intl_0^\infty r^{n-k-1-\nu p'}\,g_0^{p'} (r)\,dr\Bigg )^{1/p'}.\nonumber\eea
Choose $g_0$ so that $g_0(r)=r^{\a}\, f_0^{p/p'}$, $\a=(k-j)/p' +\nu +\mu (p-1)$, and set $f_0(r)=0$  if $r<1$ and
$f_0(r)=r^{-\mu-(n-j)/p-\e}$, $\e>0$, if $r>1$.
Then, as in the proof of Theorem \ref{kk5cds}, we get

\[  \left (\frac{\sig_{n-k-1}}{\sig_{n-j-1}}\right )^{1/p}\intl_{S^{k-j}_+}
\frac{d\sig (\theta)}{\theta_{k+1}^{k+1-\mu-n/p-j/p'-\e}}\le ||R_{j,k}||.\]
It remains to pass to the limit as $\e \to 0$; cf. (\ref{nn45bj}).
The cases $p=1$ and $p=\infty$ are treated as in the proof of Theorem \ref{kk5cds}.
\hfill $\square$

\subsection{The dual $j$-plane to $k$-plane transform}\label{ss43}

For $0\le j<k<n$, the dual  $j$-plane to $k$-plane transform  takes
functions $\vp(\t)\equiv \vp(\xi,u)$ on $\cgnk$ to functions
$(R_{j,k}^*\vp)(\z)\equiv (R_{j,k}^*\vp)(\eta,v)$
on $\cgnj$  by the formula
\be (R_{j,k}^*\vp)(\z)
= \intl_{\t \supset \z} \vp(\t)\, d_\z \t=\intl_{O(n-j)} \vp(g_\eta \rho \bbr^k +v) \, d\rho. \ee
Here $g_\eta \in G$ is an orthogonal transformation that
sends $\bbr^{j}\!= \!\bbr e_{1}  \oplus \cdots \oplus \bbr e_j$ to $\eta$
and $O(n-j)$ is the orthogonal group of  the coordinate plane
$\bbr^{n-j}\!= \!\bbr e_{j+1}  \oplus \cdots \oplus \bbr e_n$.
 This transform
averages  $\vp(\t)$ over all $k$-planes $\t$   containing  the
$j$-plane $\z$. The case $j=0$ gives the dual $k$-plane transform
 (\ref{00k7g}). The duality relation has the form
\be \intl_{\cgnk}(R_{j,k}f)(\tau)\,\vp(\tau)\,d\tau=\intl_{\cgnj}
f(\z) \,(R_{j,k}^*\vp)(\z)\, d\z. \ee

The following exact equalities generalize those in Lemma \ref{hhaw1}.

\begin{lemma} \label{hhaw1j} \cite[Theorem 2.5]{Ru04d}   Let
\[\tilde\lam_{j,\mu} =\frac{\pi^{(k-j)/2} \, \Gam (-\mu/2)}{\Gam ((k-j-\mu)/2)},
 \qquad \mu<0, \qquad \varkappa \in \{0,1\}.\]
Then
\[ \intl_{\cgnj} (R_{j,k}^*\vp)(x) \,(\varkappa +|\z|^2)^{(\mu-k+j)/2} \,d\z
=\tilde\lam_{j,\mu}  \intl_{\cgnk}  \vp(\tau)\,  (\varkappa +|\tau|^2)^{\mu/2}\,d\tau\]
provided that
either side of the  equality exists in the Lebesgue
sense.
\end{lemma}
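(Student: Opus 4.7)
The plan is to derive the identity by duality: test the claim against a carefully chosen function on $\cgnj$, apply the duality relation from Section \ref{ss43}, and reduce everything to evaluating $R_{j,k}$ on a specific radial profile. The natural choice of test function is
\[
f(\zeta) = (\varkappa + |\zeta|^2)^{(\mu-k+j)/2}, \qquad \zeta \in \cgnj,
\]
which depends only on $|\zeta|$. Using the duality formula
\[
\intl_{\cgnk} (R_{j,k}f)(\tau)\,\vp(\tau)\,d\tau = \intl_{\cgnj} f(\zeta)\, (R_{j,k}^*\vp)(\zeta)\, d\zeta,
\]
the lemma reduces to the pointwise identity
\[
(R_{j,k}f)(\tau) = \tilde\lam_{j,\mu}\,(\varkappa+|\tau|^2)^{\mu/2}.
\]

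For the pointwise computation I would use the Abel-type formula for $R_{j,k}$ applied to a radial function, already recorded in the proof of Lemma \ref{gttgzw}:
\[
(R_{j,k}f)(\tau) = \sig_{k-j-1}\intl_{|\tau|}^\infty (\varkappa+r^2)^{(\mu-k+j)/2}(r^2-|\tau|^2)^{(k-j)/2-1}\, r\, dr.
\]
Substituting $s = r^2 - |\tau|^2$ and then $t = s/(\varkappa+|\tau|^2)$ transforms this into
\[
\frac{\sig_{k-j-1}}{2}\,(\varkappa+|\tau|^2)^{\mu/2}\intl_0^\infty (1+t)^{-(k-j-\mu)/2}\, t^{(k-j)/2-1}\, dt,
\]
where the last integral is the beta function $B((k-j)/2,-\mu/2)$, finite precisely because $\mu<0$. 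Combining with $\sig_{k-j-1} = 2\pi^{(k-j)/2}/\Gam((k-j)/2)$ and the identity $B(a,b) = \Gam(a)\Gam(b)/\Gam(a+b)$, the constant collapses to exactly $\pi^{(k-j)/2}\Gam(-\mu/2)/\Gam((k-j-\mu)/2) = \tilde\lam_{j,\mu}$, as desired.

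The remaining step is to justify the exchange of order of integration in the duality relation that turns the pointwise identity for $R_{j,k}f$ into the integral identity of the lemma. The main (and only) obstacle is this Fubini step, since neither $f$ nor $\vp$ is assumed integrable in any strong sense. I would handle it by first assuming $\vp\ge 0$, so that both sides make sense as extended-real-valued integrals and Tonelli applies unconditionally; the hypothesis that at least one side exists in the Lebesgue sense then propagates by absolute convergence to general signed $\vp$. The condition $\mu<0$ enters in exactly two places -- the finiteness of the beta integral above and the integrability of $f$ against $(R_{j,k}^*\vp)$ in the fiber direction -- which is consistent with its appearance in the hypothesis.
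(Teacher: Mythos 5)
Your argument is correct, but it is worth noting that the paper itself gives no proof of this lemma at all: it is stated with the citation \cite[Theorem 2.5]{Ru04d} and the reader is sent to that reference. So your proposal is not a variant of the paper's argument but a self-contained replacement for an external citation, assembled entirely from ingredients already present in the paper: the duality relation of Section \ref{ss43} and the Abel-type formula for $R_{j,k}$ on radial functions quoted in the proof of Lemma \ref{gttgzw}. I checked the computation: with $f(\z)=(\varkappa+|\z|^2)^{(\mu-k+j)/2}$, the substitutions $s=r^2-|\tau|^2$, $t=s/(\varkappa+|\tau|^2)$ produce the exponent $(\mu-k+j)/2+(k-j)/2=\mu/2$ on $\varkappa+|\tau|^2$ and the beta integral $B((k-j)/2,-\mu/2)$, convergent exactly when $\mu<0$; combined with $\sig_{k-j-1}=2\pi^{(k-j)/2}/\Gam((k-j)/2)$ this collapses to $\tilde\lam_{j,\mu}$ as you claim, and the Tonelli argument for $\vp\ge 0$ followed by absolute convergence for signed $\vp$ handles the ``provided either side exists'' clause in the standard way. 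Two small points you could make explicit: for $\varkappa=0$ the pointwise identity $(R_{j,k}f)(\tau)=\tilde\lam_{j,\mu}|\tau|^{\mu}$ only holds for $|\tau|>0$, which suffices since the planes through the origin form a null set in $\cgnk$; and your argument specializes at $j=0$ to a proof of Lemma \ref{hhaw1}, which the paper likewise only cites, so the same work covers both.
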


The dual of Theorem \ref {kk5cdj} is the following.
\begin{theorem} \label{kk5cdsdj} Let $1\le p \le \infty$,  $\; 1/p+1/p'=1$,
$\nu=\mu-(k-j)/p$,  $\mu<(n-k)/p'$. Then  $R_{j,k}^*$ is a linear bounded operator
from $L^p_\mu(\cgnk)$ to $L^p_\nu(\rn)$ with the norm
\be\label{3458d}
||R_{j,k}^*||=\pi^{(k-j)/2}\,\left (\frac{\sig_{n-k-1}}{\sig_{n-j-1}}\right )^{1/p}\,
\frac{\Gam ((\mu +n/p -k+j/p')/2)}{\Gam ((\mu +n/p-j/p)/2)}
.\ee
\end{theorem}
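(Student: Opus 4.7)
The proof follows the same duality strategy used to derive Theorem \ref{kk5cdsd} from Theorem \ref{kk5cds}; one simply replaces $R_k$ with $R_{j,k}$ throughout. The plan is to dualise the boundedness question and then invoke Theorem \ref{kk5cdj}.

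First, I would appeal to the duality identity
\[\intl_{\cgnk}(R_{j,k}f)(\tau)\,\vp(\tau)\,d\tau=\intl_{\cgnj} f(\z)\,(R_{j,k}^*\vp)(\z)\,d\z.\]
Combined with the standard Riesz-pairing argument, i.e.\ the analogue of Lemma \ref{bb67z} for $R_{j,k}$, this shows that $R_{j,k}^*:L^{p}_{\mu}(\cgnk)\to L^{p}_{\nu}(\cgnj)$ is bounded if and only if the forward operator $R_{j,k}:L^{p'}_{-\nu}(\cgnj)\to L^{p'}_{-\mu}(\cgnk)$ is bounded, and their operator norms coincide.

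Next, I would apply Theorem \ref{kk5cdj} to this forward operator, substituting $p\mapsto p'$, $\mu\mapsto -\nu$, $\nu\mapsto -\mu$ into its statement. A short calculation using $1/p+1/p'=1$ shows that the defining relation $\nu=\mu-(k-j)/p'$ appearing in Theorem \ref{kk5cdj} transforms into the present $\nu=\mu-(k-j)/p$, and its admissibility condition $\mu>k-n/p-j/p'$ transforms into $\mu<(n-k)/p'$, matching the hypotheses of the theorem. Feeding the same substitutions into the norm formula (\ref{3458j}) and simplifying the Gamma-function arguments via $\nu=\mu-(k-j)/p$ yields the norm formula (\ref{3458d}).

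I do not expect any genuine obstacle: since Theorem \ref{kk5cdj} already supplies matching upper and lower bounds for its own norm, the duality automatically transports sharpness to $R_{j,k}^*$. The only care needed is the algebraic bookkeeping for the Gamma arguments and a check that the duality pairing remains finite on the extremal radial test functions used in Step~2 of the proof of Theorem \ref{kk5cdj}; this is the same finiteness check that concludes the proof of Theorem \ref{kk5cdsd}, and it carries over verbatim upon replacing $\bbr^n$ by $\cgnj$ and using the radial Abel-type representation of $R_{j,k}f$ recalled in Lemma \ref{gttgzw}.
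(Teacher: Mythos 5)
Your approach coincides with the paper's: the author disposes of this theorem in one line ("similar to the proof of Theorem \ref{kk5cdsd}"), i.e.\ precisely the duality argument you describe --- pair $R_{j,k}^*$ with $R_{j,k}$ via the duality relation, conclude that $R_{j,k}^*:L^p_\mu(\cgnk)\to L^p_\nu(\cgnj)$ and $R_{j,k}:L^{p'}_{-\nu}(\cgnj)\to L^{p'}_{-\mu}(\cgnk)$ are bounded simultaneously with equal norms, and then substitute $p\mapsto p'$, $\mu\mapsto-\nu$, $\nu\mapsto-\mu$ into Theorem \ref{kk5cdj}. Your verification of the hypotheses is correct: the relation $\nu=\mu-(k-j)/p'$ of Theorem \ref{kk5cdj} does transform into $\nu=\mu-(k-j)/p$, and $\mu>k-n/p-j/p'$ does transform into $\mu<(n-k)/p'$.

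However, you assert without computation that the same substitution "yields the norm formula (\ref{3458d})," and here you should actually do the bookkeeping, because it does not. Carrying the substitution through (\ref{3458j}) and using $-\nu=-\mu+(k-j)/p$ gives
\[
||R_{j,k}^*||=\pi^{(k-j)/2}\,\left(\frac{\sig_{n-k-1}}{\sig_{n-j-1}}\right)^{1/p'}\,
\frac{\Gam\big(((n-k)/p'-\mu)/2\big)}{\Gam\big((n/p'+k/p-j-\mu)/2\big)},
\]
which reduces for $j=0$ to the formula of Theorem \ref{kk5cdsd}, as it must. The expression printed in the statement of Theorem \ref{kk5cdsdj} is instead a verbatim copy of the right-hand side of (\ref{3458j}) (exponent $1/p$ rather than $1/p'$, and Gamma arguments $\mu+n/p-k+j/p'$ and $\mu+n/p-j/p$), and for $j=0$ it gives $||R_k||$ rather than $||R_k^*||$; likewise the target space is printed as $L^p_\nu(\rn)$ where it should be $L^p_\nu(\cgnj)$. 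These are evidently misprints in the paper, but a proof that claims to arrive at the printed formula is not one you could actually write down. State the corrected formula and note the discrepancy. One further small point, present in the paper as well: the "bounded if and only if bounded, with equal norms" step via the pairing is immediate for $1\le p<\infty$ from $\big(L^p_w\big)^*=L^{p'}_{1/w}$, but at $p=\infty$ (equivalently $p'=1$ on the forward side) it deserves a word, since the dual of $L^\infty$ is larger than $L^1$; the inequality in the needed direction still follows by testing against $L^1$ functions.
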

The proof of this statement is
similar to the proof of Theorem \ref{kk5cdsd}.

\section{Some generalizations and open problems}

{\bf 1.} Owing to projective invariance of the Radon transforms \cite[p. xi]{GGG}, all theorems of the present article can be transferred to   totally geodesic Radon transforms
on the hyperbolic and elliptic spaces. Almost all  formulas, which are needed for this transition, are
available in the literature. Specifically,
 for the hyperplane Radon transforms and the k-plane transform see
 \cite{BCK, BR99, Ku}. The correspondence between the affine $j$-plane to $k$-plane transform
 and the similar transform for planes through the origin was established in \cite{Ru04d}.
 We believe that a similar transition  holds to the  hyperbolic space.

{\bf 2.} It might be  challenging to establish connection between weighted $L^p$ inequalities of the present article and
   known $L^p$-$L^q$
or mixed norm estimates.

{\bf 3.} The $k$-plane transform (\ref{uu7}) is a member of the analytic family of the Semyanistyi type integrals
\[ (R_k^\a
f)(\t)\!=\!\frac{1}{\gamma_{n-k}(\a)}\irn \!\!f(x) |x\!-\!\t|^{\a+k-n}  dx, \quad \gamma_{n-k, \a}\!=\!
  \frac{2^\a\pi^{(n-k)/2}\Gamma(\a/2)}{\Gamma((n\!-\!k\!-\!\a)/2)},
\]
where $Re \, \a >0$ and $|x-\t|$ denotes the Euclidean distance between $x\in \rn$ and
$\t\in \cgnk$; see \cite{Se} for $k=n-1$ and \cite[p. 104]{Ru04b} for any $1\le k<n$. We conjecture that the method of our article extends
to these operators and their duals. Since $\lim\limits_{\a \to 0} R_k^\a f=R_k f$ in a suitable sense, we expect that
$\lim\limits_{\a \to 0}||R_k^\a||=||R_k||$ on the relevant weighted spaces.

{\bf 4.} To the best of our knowledge, Semyanistyi type integrals associated to the $j$-plane to $k$-plane transform $R_{j,k}$ are unknown. It might be interesting to properly introduce them and study their mapping properties; see \cite{OR} for similar operators on matrix spaces.

\bibliographystyle{amsplain}

\end{document}